\theoremstyle{thmstyleone}%
\newtheorem{theorem}{Theorem}
\newtheorem{proposition}[theorem]{Proposition}
\newtheorem{lemma}[theorem]{Lemma}
\newtheorem{Algorithm}[theorem]{Algorithm}
\newtheorem{assumption}[theorem]{Assumption}
\theoremstyle{thmstyletwo}%
\newtheorem{example}{Example}%
\newtheorem{remark}{Remark}%
\theoremstyle{thmstylethree}%
\newtheorem{definition}{Definition}%
\DeclareMathOperator*{\grad}{grad}
\DeclareMathOperator*{\amin}{argmin}
\DeclareMathOperator*{\prox}{prox}
\DeclareMathOperator*{\ep}{EP}
\newcommand{\R}{\mathbb{R}}
\newcommand{\M}{\mathbb{M}}
\newcommand{\N}{\mathbb{N}}
\begin{document}

\title[Regularized Extragradient Methods for Solving Equilibrium Problems on Hadamard Manifolds]{Regularized Extragradient Methods for Solving Equilibrium Problems on Hadamard Manifolds}


\author[1,2]{\fnm{Shikher} \sur{Sharma}}\email{shikhers043@gmail.com}

\author*[2]{\fnm{Pankaj} \sur{Gautam}}\email{pgautam908@gmail.com}
\equalcont{These authors contributed equally to this work.}

\author[1]{\fnm{Simeon} \sur{Reich}}\email{sreich@technion.ac.il}
\equalcont{These authors contributed equally to this work.}

\affil*[1]{\orgdiv{Department of Mathematics}, \orgname{The Technion -- Israel Institute of Technology},  \city{Haifa}, \postcode{32000}, \country{Israel}}

\affil[2]{\orgdiv{Department of Applied Mathematics and Scientific Computing}, \orgname{IIT Roorkee},  \country{Country}}


	\abstract{Employing two distinct types of regularization terms, we propose two regularized extragradient methods for solving equilibrium problems on Hadamard manifolds. The sequences generated by these extragradient algorithms converge to a solution of the equilibrium problem without requiring the Lipschitz continuity of the bifunction or imposing additional conditions on the parameters. We establish convergence results for both algorithms and derive global error bounds along with $R$-linear convergence rates in cases where the bifunction is strongly pseudomonotone. Finally, we present numerical experiments to demonstrate the effectiveness of our methods.\\
	\textbf{Keywords:} Hadamard manifold, Equilibrium Problem, Busemann function,  Extragradient method, Linear convergence
	}

\maketitle

\section{Introduction}

The equilibrium problem (EP) has been extensively studied and remains a highly active area of research. One of the key motivations behind its investigation is its broad applicability, since numerous mathematical problems can be formulated as equilibrium problems. These include optimization problems, Nash equilibrium problems, complementarity problems, fixed point problems, and variational inequality problems. A comprehensive discussion of EP and its developments can be found in the works of Blum and Oettli \cite{Blum1994}, Binachi and Schaible \cite{Bianchi1996}, as well as in references therein.

The study of the proximal point method on Hadamard manifolds has gained considerable attention, with various researchers exploring its application to specific cases of the equilibrium problem (EP); see, for instance, the works of Ferreira and Oliveira~\cite{Ferreira2002}, Li et al.~\cite{Li2009JLM}, and Tang et al.~\cite{Tang2013}.
More recently, there has been a natural shift toward extending classical Euclidean concepts and methodologies to the Riemannian setting. Notable contributions in this direction include those by Ledyaev and Zhu \cite{Ledyaev2007}, Ferreira et al. \cite{Ferreira2005}, Li et al. \cite{Li2011SJO,Li2011SVVA}, among others. A significant advantage of adapting these techniques to Riemannian manifolds is the ability to reformulate certain nonconvex problems as convex ones by selecting an appropriate Riemannian metric. Relevant discussions on this topic can be found in the papers by Rapcs\'ak \cite{Rapscak}, Cruz Neto et al. \cite{Neto2002}, Bento and Melo \cite{Bento2012}, and by Colao et al. \cite{Colao2012}.

Note that Hadamard manifolds generally do not have a linear structure, which indicates that properties, techniques as well as algorithms in linear spaces cannot be used in Hadamard manifolds. Therefore, it is valuable and interesting to extend algorithms for solving equilibrium problems from linear spaces to Hadamard manifolds.

Let $\M$ be an Hadamard manifold. Given $C \subset \M$, a nonempty, closed and convex set, and a bifunction $F:C \times C \to \R$ satisfying the property $F(x,x)=0$ for all $x\in C$, the equilibrium problem in the Riemanninan context (denoted by $\ep(F,C)$) is formulated as follows:
\begin{equation}\label{equib-prob}
	\text{Find } x^*\in C \text{ such that } F(x^*,y)\geq 0 \text{ for all } y \in C.
\end{equation}

In recent years various extragradient-based methods have been developed in order to solve equilibrium problem \eqref{equib-prob} on Hadamard manifolds. Colao et al. \cite{Colao2012} established the existence of solutions to such problems under suitable conditions on the bifunction. Building upon their work, Cruz Neto et al. \cite{Neto2016OL} proposed an extragradient algorithm with a fixed step size and proved its global convergence. Inspired by the works of Cruz Neto et al. \cite{Neto2016OL} and Hieu \cite{Hieu2018AA,Hieu2019}, Khammahawong et al. \cite{Khammahawong2020} introduced two extragradient methods employing decreasing and non-summable step sizes, extending Hieu's results from Hilbert space to the Hadamard setting. For strongly pseudomonotone bifunctions, they demonstrated convergence of the generated sequences under mild assumptions.

Further extending this line of research, Ansari et al. \cite{Ansari2020} proposed two adaptive explicit extragradient algorithms for solving EP \eqref{equib-prob} on Hadamard manifolds. Their methods utilize a simple, information-based adaptive step size rule, eliminating the need for prior knowledge of the Lipschitz constant. Convergence was established under pseudomonotonicity, with linear convergence shown for strongly pseudomonotone bifunctions.

It is worth noting that the methods in \cite{Neto2016OL, Khammahawong2020, Ansari2020} require solving a strongly convex optimization problem twice per iteration. To reduce the computational cost, Chen et al.~\cite{Chen2021} proposed a modified golden ratio algorithm that requires only one evaluation per iteration.
Numerical experiments have shown that their method outperforms those of \cite{Neto2016OL, Khammahawong2020} in terms of efficiency and computational cost.

In 2022, Babu et al. \cite{Babu2022} developed several extragradient algorithms incorporating an Armijo-type step size rule for solving equilibrium problems on Hadamard manifolds. More recently, Oyewole and Reich  \cite{Oyewole2023inertial} introduced an inertial subgradient extragradient method with a self-adaptive step size.
However, these methods tend to be computationally intensive, as the associated strongly convex optimization problem may need to be solved multiple times at each iteration.

In 2022, Bento et al. \cite{Bento2022AOR} introduced a regularization of the proximal point algorithm in the context of Hadamard manifolds using a new regularization term which has a clear interpretation in a recent variational rationality approach to human behavior. Later, Bento et al. \cite{Bento2022} introduced a new resolvent for the bifunction in terms of Busemann functions. An advantage of this new proposal is that, in addition to being a natural extension of its counterpart in the linear setting introduced by Combettes and Hirstoaga \cite{Hirstoaga}, the new term that performs the regularization is a convex function in general Hadamard manifolds. Bento et al. \cite{Bento2024} have recently considered the resolvent via Busemann functions introduced by Bento et al. \cite{Bento2022} and presented a proximal point method for solving equilibrium problems on Hadamard manifolds.

Motivated by the works of Bento et al. \cite{Bento2022AOR},  Bento et al. \cite{Bento2022} and Bento et al. \cite{Bento2024}, we propose two regularized extragradient methods incorporating distinct regularization terms for solving equilibrium problems on Hadamard manifolds. The proposed algorithms operate without any step size restrictions, do not require a line search procedure, and do not assume Lipschitz continuity of the bifunction, thus making them suitable for solving equilibrium problems involving monotone bifunctions.

\section{Preliminaries}

For the basic definitions and results regarding the geometry of Riemannian manifolds, we refer to \cite{Docarmo,Sakai,Udriste}. Let $\M$ be an $m$-dimensional differentiable manifold and let  $x \in \M$. The set of all tangent vectors at the point $x$, denoted by $T_x\M$, is called the tangent space of $\M$ at $x$. It forms a real vector space of dimension $m$. The tangent bundle of $\M$ is defined as the disjoint union $T\M = \bigcup_{x \in \M} T_x\M$.

We suppose that $\M$ is equipped with a Riemannian metric, that is,  a smoothly varying family of inner products on the tangent spaces $T_x\M$.  The metric at $x\in \M$ is denoted $\langle \cdot, \cdot \rangle_x:T_x\M \times T_x\M \to \R$.
A differentiable manifold $\M$ with a Riemannian metric $\langle \cdot , \cdot \rangle$ is said to be a Riemannian manifold.
The norm corresponding to the inner product $\langle \cdot , \cdot \rangle_x$ on $T_x\M$ is denoted by $\|\cdot \|_x$.

Let $x,y \in \M$ and let $\gamma :[a,b] \to \M$ be a piecewise smooth curve joining $x$ to $y$. Then the length of the curve $\gamma$ is defined by 
$L(\gamma) := \int_a^b \|\gamma^{\prime }(t)\| dt$, where $\gamma^{\prime }(t) \in T_{\gamma(t)}\M$ is a tangent vector at the point $\gamma(t) \in \M$. The minimal length of all such curves joining $x$ to $y$ is called the Riemannian distance. It induces the original topology on $\M$ and is denoted by $d(x,y)$. 
In other words, $d(x, y) := \inf\{L(\gamma) : \gamma \in C_{xy}\}$. where $C_{xy}$ denotes the set of all continuously differentiable curves $\gamma:[0,1]\to \M$ such that $\gamma(0)=x$ and $\gamma(1)=y$.

A single-valued vector field on a differentiable manifold $\M$ is a smooth mapping $A\colon\M \to T\M$ such that, for each $x \in \M$, 
there exists a tangent vector $A(x) \in T_{x}\M$.
Let $\triangle $ be the Levi-Civita connection associated with the Riemannian manifold $M$.
A vector field $A$ along $\gamma$ is said to be parallel if $\triangle_{\gamma^{\prime }}A=0$. 
If $\gamma^{\prime }$ is parallel along $\gamma$, that is, $\triangle_{\gamma^{\prime }} \gamma^{\prime }=0$, then $\gamma$ is said to be a geodesic. In this case, $\|\gamma^{\prime }\|$ is a constant. Moreover, if $\|\gamma^{\prime }\|=1$, then $\gamma$ is called a normalized geodesic. A geodesic joining $x$ to $y$ in the Riemannian manifold $\M$ is said to be a minimal geodesic if its length is equal to $d(x,y)$. We denote the geodesic joining $x$ and $y$ by $\gamma(x,y; \cdot)$, that is,
$\gamma(x,y; \cdot):[a,b] \to \M$ is such that, for $a,b \in \R$, we have $\gamma(x,y;a)=x$ and $\gamma(x,y;b)=y$.

The Riemannian metric induces a mapping \( f \mapsto \grad f \) that associates to each differentiable function \( f \colon \M \to \R \) 
a unique vector field \( \grad f \colon \M \to T\M \), called the gradient of \( f \), defined by
\(
\langle \grad f(x), X(x) \rangle = df_x(X(x))
\)
for all vector fields \( X \colon \M \to T\M \) and all \( x \in \M \). Here, \( df_x \) denotes the differential of \( f \) at the point \( x \in \M \).

A Riemannian manifold $\M$ is said to be complete if for any $x\in\M$, each geodesic emanating from $x$ is defined for all $t\in\R$. 
By the Hopf-Rinow theorem \cite[Theorem 1.1]{Sakai}, if $\M$ is complete, then each pair of points in $\M$ can be joined by a minimal geodesic. 
Moreover, $(\M,d)$ is a complete metric space. If $\M$ is a complete Riemannian manifold, then the exponential map $\exp_x \colon T_x\M \to \M$ at $x\in \M$ is defined
by $\exp_xv=\gamma_{x,v}(1) \text{ for all } v\in T_x\M$,
where $\gamma_{x,v}\colon \R \to \M$ is the unique geodesic starting from $x$ with velocity $v$, that is, $\gamma_{x,v}(0)=x$ and $ \gamma'_{x,v}(0)=v$. It is known that
$\exp_x(tv)=\gamma_{x,v}(t)$ for each real number $t$ and $\exp_x(\mathbf{0})=\gamma_{x,\mathbf{0}}(0)=x$, where $\mathbf{0}$ is the zero tangent vector.

\begin{definition}[\cite{Martin2}]
	A complete simply connected Riemannian manifold of nonpositive sectional curvature is called an Hadamard manifold.
\end{definition}

%
%
%

\begin{definition}[\cite{Martin2}]
	A nonempty subset $C$ of an Hadamard manifold $\M$ is said to be geodesically convex if, for any two points $x$ and $y$ in $C$, the geodesic joining $x$ to $y$ is contained in $C$, that is, if $\gamma(x,y; \cdot)\colon[a,b]\to \M$ is the geodesic such that $x=\gamma(x,y;a)$ and $y=\gamma(x,y;b)$, then
	$\gamma((1-t)a+tb) \in C$ for all $t\in [0,1]$.
\end{definition}

\begin{definition}[\cite{Martin2}]
	Let $\M$ be an Hadamard manifold. A function $f\colon\M \to \R$ is said to be geodesically convex
	if, for any geodesic $\gamma(x,y; \cdot)\colon[a,b] \to \M$,
	the composite function $f\circ \gamma(x,y;\cdot)\colon[a,b] \to \R$ is convex, that is,
	\begin{equation*}
		f\circ \gamma(x,y;(1-t)a+tb) \leq (1-t)(f\circ \gamma(x,y;a))+ t(f \circ
		\gamma(x,y;b))
	\end{equation*}
	for any $a,b \in \R$, $x,y \in \M$ and $t\in [a,b]$.
\end{definition}

\begin{lemma}[\cite{Martin2}]\label{exp-rem}
	Let $\{x_n\}$ be a sequence in an Hadamard manifold $\M$ such that $x_n \to x\in \M$. Then, for any $y \in \M$, we have
	$$\exp_{x_n}^{-1}y \to \exp_x^{-1}y \mbox{ and }  \exp_y^{-1}x_n \to \exp_y^{-1}x.$$
\end{lemma}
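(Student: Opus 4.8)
The plan is to derive both convergences from the smoothness of the exponential map together with the Cartan--Hadamard theorem, which guarantees that on an Hadamard manifold $\M$ the exponential map based at any point is a global diffeomorphism. The key structural fact I would invoke is the stronger global version: the smooth map
\[
\Phi\colon T\M \to \M\times\M,\qquad \Phi(x,v)=(x,\exp_x v),
\]
is a diffeomorphism onto $\M\times\M$. Consequently its inverse is smooth and has the form $\Phi^{-1}(x,y)=(x,\exp_x^{-1}y)$; in particular the assignment $(x,y)\mapsto \exp_x^{-1}y$ is a continuous (indeed smooth) map from $\M\times\M$ into the tangent bundle $T\M$.

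Granting this, the second convergence is immediate. Fixing the base point $y$, the map $z\mapsto \exp_y^{-1}z$ is continuous from $\M$ into the single tangent space $T_y\M$, so $x_n\to x$ yields $\exp_y^{-1}x_n\to\exp_y^{-1}x$ inside $T_y\M$ with no further comment, since all the vectors involved share a common tangent space.

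For the first convergence I would compose the continuous inclusion $x\mapsto(x,y)$ with $\Phi^{-1}$ and read off the second component, obtaining the continuous map $x\mapsto \exp_x^{-1}y\in T\M$. Since $x_n\to x$ in $\M$, continuity then forces $\exp_{x_n}^{-1}y\to\exp_x^{-1}y$ in $T\M$, which is exactly the asserted statement.

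The one point requiring care, and the main obstacle, is the meaning of convergence in the first statement: the vectors $\exp_{x_n}^{-1}y$ and $\exp_x^{-1}y$ live in the \emph{different} tangent spaces $T_{x_n}\M$ and $T_x\M$. I would interpret the convergence as convergence in the total space $T\M$ with its manifold topology, which is precisely what the continuity of $\Phi^{-1}$ delivers; equivalently, one may parallel transport $\exp_{x_n}^{-1}y$ along the minimal geodesic $\gamma(x_n,x;\cdot)$ into $T_x\M$ and check convergence there, using that parallel transport depends continuously on its endpoints. Pinning down this identification and verifying independence from the chosen local trivialization is the only delicate step; once $\Phi^{-1}$ is known to be smooth, the remainder is routine continuity.
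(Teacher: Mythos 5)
Your argument is correct, and there is nothing in the paper to compare it against: Lemma \ref{exp-rem} is quoted from the reference \cite{Martin2} and the paper supplies no proof of its own. Your route is the standard one. By the Cartan--Hadamard theorem each $\exp_x$ is a global diffeomorphism, and since an Hadamard manifold has no conjugate points, $d(\exp_x)_v$ is nonsingular for every $v\in T_x\M$; hence $\Phi(x,v)=(x,\exp_x v)$ is a smooth bijection with everywhere invertible differential, so by the inverse function theorem it is a diffeomorphism of $T\M$ onto $\M\times\M$, and $(x,y)\mapsto\exp_x^{-1}y$ is continuous into $T\M$. That settles both limits, the second one trivially since all vectors lie in the fixed space $T_y\M$. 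You are also right to flag that the first convergence only makes sense after specifying a topology in which vectors from different tangent spaces can be compared; the convention in the literature this paper draws on is exactly the manifold topology of $T\M$ (equivalently, convergence after parallel transport along the minimal geodesics $\gamma(x_n,x;\cdot)$, which agrees with it because parallel transport depends continuously on its endpoints). The only cosmetic gap is that you state rather than verify the nonsingularity of $d\Phi$; on a Hadamard manifold this is immediate from the absence of conjugate points, so the proof is complete once that one line is added.
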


\begin{remark} [\cite{Martin2}] 
	Let $\M$ be an Hadamard manifold and let $x,y,z\in \M$. Then
	\begin{equation}\label{triangle-ineq}
		d^2(x,y) +d^2 (y,z)- d^2(z, x) \leq 2 \langle \exp_y^{-1}x, \exp_y^{-1}z \rangle.
	\end{equation}
\end{remark}

\begin{definition}[\cite{Udriste}]
	Let $\M$ be an Hadamard manifold and let $C$ be a nonempty,
	geodesically convex subset of $\M$. Let $g \colon C \to \R$ be a real-valued function. Then the subdifferential $\partial g\colon\M \to T\M$ of $g$ at $x$ is defined by
	\begin{equation*}\label{Subdiffeq2}
		\partial g(x) \coloneqq \left\{v\in T_x\M : \langle v, \exp_x^{-1}y \rangle \leq g(y)-g(x) \text{ for all } y \in C\right\}.
	\end{equation*}
\end{definition}

%

\begin{definition}[\cite{Jost1995}]
	Let $C$ be a nonempty subset of an Hadamard manifold $\M$ and let $f\colon C \to (-\infty,\infty]$ be a proper, geodesically convex and lower semicontinuous function. The proximal operator of $f$ is defined by
	$$\prox_{\lambda f}(x) \coloneq
	\amin_{y\in C}\left(f(y)+\frac{1}{2\lambda}d^2(y,x)\right) \text{ for all } x\in C \text{ and for any } \lambda \in (0,\infty).$$
\end{definition}


\begin{lemma}[\cite{Ferreira2002}, Theorem 5.1] \label{prox-lem}
	Let $\M$ be an Hadamard manifold and let $f:\M \to \R$ be a geodesically convex function. Let $\{x_n\}$ be the sequence generated by the proximal point algorithm, 
	that is, 
	\[x_{n+1}=\amin_{y\in \M}\left\{f(y)+\frac{1}{2\lambda_n}d^2(x_n,y)\right\} \text{ for all } n \in \N_0\]
	with the initial point $x_0\in \M$ and $\lambda_n\in (0,\infty)$. Then the sequence $\{x_n\}$ is well defined and characterized by $\frac{1}{\lambda_n}\exp_{x_{n+1}}^{-1}x_n \in \partial f(x_{n+1}).$
\end{lemma}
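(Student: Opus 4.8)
The plan is to fix the index $n$, abbreviate $\phi_n(y):=f(y)+\frac{1}{2\lambda_n}d^2(x_n,y)$ (so that $x_{n+1}$ is by definition a minimizer of $\phi_n$ over $\M$), and split the argument into two parts: first that $\phi_n$ possesses a unique minimizer (well-definedness), and second that this minimizer is exactly characterized by the stated subdifferential inclusion.

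For well-definedness I would argue existence and uniqueness separately. Note that $\phi_n$ is the sum of the finite geodesically convex (hence continuous) function $f$ and the function $g:=\frac{1}{2\lambda_n}d^2(x_n,\cdot)$. On a Hadamard manifold $y\mapsto d^2(x_n,y)$ is strictly (in fact strongly) geodesically convex, so $g$, and therefore $\phi_n$, is strictly convex; this already forces \emph{uniqueness} of any minimizer. For \emph{existence}, since $f$ is finite and convex it admits a subgradient $v\in\partial f(x_n)$, and using $\|\exp_{x_n}^{-1}y\|=d(x_n,y)$ together with the Cauchy--Schwarz inequality I obtain the affine minorant $f(y)\ge f(x_n)-\|v\|\,d(x_n,y)$. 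Hence
\[
\phi_n(y)\ \ge\ f(x_n)-\|v\|\,d(x_n,y)+\frac{1}{2\lambda_n}d^2(x_n,y)\ \xrightarrow[d(x_n,y)\to\infty]{}\ +\infty,
\]
so $\phi_n$ is coercive and its sublevel sets are bounded and closed, hence compact by the Hopf--Rinow theorem; a continuous function attains its minimum on a compact set, producing the (unique) minimizer $x_{n+1}$.

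For the characterization I would run a first-order (Fermat-type) argument along geodesics, using the standard gradient formula $\grad\!\big(\tfrac12 d^2(x_n,\cdot)\big)(y)=-\exp_y^{-1}x_n$, which gives $\grad g(x_{n+1})=-\frac{1}{\lambda_n}\exp_{x_{n+1}}^{-1}x_n$. Fix $y\in\M$ and let $\sigma(t):=\exp_{x_{n+1}}\!\big(t\,\exp_{x_{n+1}}^{-1}y\big)$, $t\in[0,1]$, be the geodesic from $x_{n+1}$ to $y$, so $\sigma'(0)=\exp_{x_{n+1}}^{-1}y$. Since $x_{n+1}$ minimizes $\phi_n$, the function $t\mapsto\phi_n(\sigma(t))$ attains its minimum at the endpoint $t=0$, so its right derivative there is nonnegative. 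Splitting this derivative into the contribution of the smooth part $g\circ\sigma$, namely $\langle\grad g(x_{n+1}),\sigma'(0)\rangle=-\frac{1}{\lambda_n}\langle\exp_{x_{n+1}}^{-1}x_n,\exp_{x_{n+1}}^{-1}y\rangle$, and of the convex part $f\circ\sigma$, whose right derivative at $0$ is bounded above by the chord slope $f(y)-f(x_{n+1})$, I reach
\[
\frac{1}{\lambda_n}\langle\exp_{x_{n+1}}^{-1}x_n,\exp_{x_{n+1}}^{-1}y\rangle\ \le\ (f\circ\sigma)'(0^+)\ \le\ f(y)-f(x_{n+1}).
\]
As $y$ is arbitrary, the definition of the subdifferential yields $\frac{1}{\lambda_n}\exp_{x_{n+1}}^{-1}x_n\in\partial f(x_{n+1})$. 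The converse (that any $z$ with $\frac{1}{\lambda_n}\exp_z^{-1}x_n\in\partial f(z)$ minimizes $\phi_n$, hence equals $x_{n+1}$ by uniqueness) follows by reversing the estimate through the subgradient inequality for the convex function $g$.

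The main obstacle I anticipate is manifold bookkeeping rather than genuine difficulty: one must justify the gradient formula for the squared distance and the additivity of the right directional derivative of $\phi_n\circ\sigma$ (equivalently a subdifferential sum rule for a convex-plus-smooth pair), and ensure the coercivity/compactness step really delivers a minimizer. The curvature hypothesis enters only through the strict convexity of $d^2(x_n,\cdot)$ (for uniqueness) and the smoothness of the squared distance (for the gradient formula). It is worth noting that the crude comparison in Remark~\ref{triangle-ineq} points in the wrong direction to establish the subgradient inequality by direct chaining, which is precisely why the geodesic first-order argument, rather than a manipulation of that estimate, is the natural route.
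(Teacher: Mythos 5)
This lemma is quoted in the paper directly from \cite{Ferreira2002} (Theorem 5.1) with no proof supplied, so there is no in-paper argument to compare against; your proof is correct and is essentially the standard one from that reference --- coercivity (via an affine minorant from a subgradient of $f$ at $x_n$) plus strict, indeed strong, geodesic convexity of $d^2(x_n,\cdot)$ for existence and uniqueness, and the first-order condition along geodesics together with $\grad\bigl(\tfrac12 d^2(x_n,\cdot)\bigr)(y)=-\exp_y^{-1}x_n$ for the subdifferential characterization. Your converse step is also sound: combining the assumed subgradient inequality for $f$ at $z$ with the subgradient inequality for the convex function $\tfrac{1}{2\lambda_n}d^2(x_n,\cdot)$ (equivalently, the comparison inequality of Remark~\ref{triangle-ineq}, which does point in the right direction for this implication) gives $\phi_n(y)\geq\phi_n(z)$ for all $y$, so $z=x_{n+1}$ by uniqueness.
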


\begin{remark} \label{subdif-rem}
	In view of  \cite[Lemma 4.2]{Ferreira2002}, $\prox_{\lambda f}(x)$ is single-valued. By the definition of $\partial f(x_{n+1})$ and Lemma \ref{prox-lem}, we have
	\[\frac{1}{\lambda_n}\langle \exp_{x_{n+1}}^{-1}x_n, \exp_{x_{n+1}}^{-1}x \rangle \leq f(x)-f(x_{n+1}) \text{ for all } x\in \M.\]
\end{remark}

%
%
%

\begin{remark}\label{dist-grad-rem}
	Note that $\grad d(y,x_n)=-\frac{\exp_{y}^{-1}x_n}{d(y,x_n)}$ provided $y\neq x_n$.
\end{remark}

\begin{definition}[\cite{Bento2022AOR, Tan2024}]
	Let $C$ be a nonempty, closed and geodesically convex subset of an Hadamard manifold $\M$.	A bifunction $F:C\times C \to \R$ is said to be
	\begin{enumerate}
		\item[(i)] monotone if 
		\(F(x,y)+F(y,x)\leq 0\) for all $x,y\in C$;
		\item[(ii)] pseudomonotone if  
		\(F(x,y)\geq 0\) implies \(F(y,x)\leq 0\) for all $x,y\in C$;
		\item[(iii)] $\alpha$-strongly monotone if there exists a constant $\alpha>0$ such that
		\(F(x,y)+F(y,x)\leq -\alpha d^2(x,y)\) for all $x,y\in C$;
		\item[(iv)] $\beta$-strongly pseudomonotone if there exists a constant $\beta>0$ such that 
		\(F(x,y)\geq 0\) implies \(F(y,x)\leq -\beta d^2(x,y)\) for all $x,y\in C$.
	\end{enumerate}
\end{definition}


The Busemann functions, originally introduced by Herbert Busemann to formulate the parallel axiom in certain classes of metric spaces~\cite{Busemann1955}, have since found applications far beyond their original purpose; see, for example,~\cite{Busemann1993, Li1987, Shiohama, Sormani} and the references therein.

\begin{definition}[\cite{Busemann1993}]
	Let $\M$ be an Hadamard manifold and let $\gamma_{z,x}:[0,\infty) \to \M$ be a geodesic ray parametrized by arc length, starting from $z$ and passing through $x$.  
	The Busemann function associated with $\gamma_{z,x}$ is defined by
	\[
	b_{\gamma_{z,x}}(y) \coloneqq \lim_{t \to +\infty} \left[ d(y, \gamma_{z,x}(t)) - t \right],
	\quad \text{for all } y \in \M.
	\]
\end{definition}

Recently, Busemann functions have been studied in the context of optimization; see, \cite{Bento2022, Bento2023}. In this direction, Bento et al.~\cite{Bento2022} introduced the following resolvent associated with a bifunction, which is defined using a Busemann function.

\begin{definition}[\cite{Bento2022}]
	Let $C$ be a nonempty, closed, and geodesically convex subset of an Hadamard manifold $\M$, and let $F:C \times C \to \R$ be a bifunction. The Busemann resolvent $J_\lambda^F:C \to \M$ associated with $F$ is defined by
	\begin{equation}\label{ResolF}
		J_\lambda^F(x) \coloneqq \{z \in C : \lambda F(z,y) + d(z,x)b_{\gamma_{z,x}}(y) \geq 0 \text{ for all } y \in C\},
	\end{equation}
	for all $x \in C$ and any $\lambda \in (0,\infty)$.
\end{definition}

By \cite[Lemma 3.4]{Ballmann}, the Busemann function $b_{\gamma_{x,z}}(\cdot)$ is Lipschitz continuous with Lipschitz constant equal to $1$, geodesically convex, and satisfies $\|\operatorname{grad} b_{\gamma_{x,z}}(\cdot)\| = 1$. Moreover, as noted in~\cite[page 273]{Bridson}, in the Euclidean setting one has
\[
d(x,z)\, b_{\gamma_{x,z}}(y) = \langle x - z, y - x \rangle.
\]
This identity shows that the additional term appearing in \eqref{ResolF} is a natural extension of the regularization term introduced in~\cite{Hirstoaga}.

On the other hand, it is known that any linear affine function on the Poincar\'e plane must be constant; see~\cite[Theorem 2.2, p.~299]{Udriste}. This fact reflects the absence of nontrivial affine functions in negatively curved spaces. In line with~\cite[Theorem 1]{Bento2023}, this result extends to general Hadamard manifolds with negative sectional curvature. Consequently, Busemann functions provide a meaningful substitute for affine functions in linear settings. As a result, the resolvent defined in~\eqref{ResolF}, which incorporates a Busemann function, serves as a natural and well-adapted analogue of the classical linear resolvent in the Riemannian framework.

\begin{proposition}\label{PropBuse}
	Let $\M$ be an Hadamard manifold and let $x,z\in \M$. Then 
	\[d(x,z)b_{\gamma_{z,x}}(y)\geq -\langle \exp_z^{-1}x, \exp_z^{-1}y \rangle \text{ for all } y \in \M.\]
\end{proposition}
\begin{proof}
	Since  $\gamma_{z,x}$ is the geodesic ray parameterized by arc length, we have
	\[\gamma_{z,x}(t))=\exp_z (t/d(x,z))\exp_z^{-1}x \text{ for all } t \in [0,\infty).\]
	Using \eqref{triangle-ineq}, we get
	\begin{align}
		d^2(y,\gamma_{z,x}(t))&\geq d^2(z,y)+d^2(\gamma_{z,x}(t),z)-2\langle \exp_z^{-1}\gamma_{z,x}(t), \exp_z^{-1}y\rangle\nonumber\\
		&= d^2(z,y)+\frac{t^2}{d^2(x,z)} d^2(z,x)-2\frac{t}{d(x,z)} \langle \exp_z^{-1}x, \exp_z^{-1}y\rangle\nonumber\\
		&= d^2(z,y)-\frac{2t}{d(x,z)} \langle \exp_z^{-1}x, \exp_z^{-1}y\rangle-t^2. \label{Bregeqq1}
	\end{align}
	Using the same argument as in  \cite[Example 8.24 (3), Page  274]{Bridson} and \eqref{Bregeqq1}, we have
	\begin{align*}
		b_{\gamma_{z,x}}(y)&=\lim_{t \to +\infty} \left[ d(y, \gamma_{z,x}(t)) - t \right]=\lim_{t \to +\infty}\frac{1}{2t}\left( d^2(y,\gamma_{z,x}(t))-t^2\right)\\
		&\geq \lim_{t \to +\infty}\frac{1}{2t} \left(d^2(z,y)-\frac{2t}{d(x,z)} \langle \exp_z^{-1}x, \exp_z^{-1}y\rangle\right)\\
		&= -\frac{1}{d(x,z)} \langle \exp_z^{-1}x, \exp_z^{-1}y\rangle.
	\end{align*}
	Therefore 
	\(d(x,z)b_{\gamma_{z,x}}(y)\geq- \langle \exp_z^{-1}x, \exp_z^{-1}y \rangle\).
\end{proof}

\begin{remark}
	Let $\M$ be an Hadamard manifold with zero sectional curvature. Then
	\begin{enumerate}
		\item[(i)] Let $x, z \in \M$. Using Proposition~\ref{PropBuse}, we have
		\[
		d(x,z)\, b_{\gamma_{z,x}}(y)
		= - \langle \exp_z^{-1} x,\; \exp_z^{-1} y \rangle
		\text{ for all } y \in \M.
		\]
		\item[(ii)] From \emph{\cite[Theorem~4.7(ii)]{Colao2012}}, it follows that $J_\lambda^F$ is firmly nonexpansive.
	\end{enumerate}
\end{remark}

\begin{assumption} Let $C$ be a nonempty, closed and geodesically convex subset of an Hadamard manifold $\M$ and let $F:C \times C \to \R$ be a bifunction.
	We consider the following assumptions:
	\begin{enumerate}
		\item[(A1)] $F$ is   
		monotone and $F(x,x)=0$ for all $x\in C$;
		\item[(A2)] $F(x,\cdot)$ is geodesically convex and subdifferentiable on $C$ for each $x\in C$;
		\item[(A3)] $F(\cdot,y)$ is upper semicontinuous on $C$ for each $y\in C$, that is, $\limsup_{n\to \infty} F(x_n,y)\leq F(x,y)$ for each $y\in C$ and each $\{x_n\} \subset C$ with $x_n\to x$;
		\item[(A4)] Given a fixed $z_0\in \M$, consider a sequence $\{z_n\}\subset C$ such that $d(z_n,z_0) \to \infty$ as $n \to \infty$.  
		Then there exist $x^*\in C$ and $n_0\in \N$ such that
		$$F(z_n,x^*)\leq 0 \text{ for all } n \geq n_0.$$
	\end{enumerate}
\end{assumption}



Let $\lambda\in (0,\infty)$ and $x\in C$ and let $F:C\times C \to \R$ be a bifunction. 
For each $x\in C$, consider the set
$$K_x \coloneqq \{z\in C: \lambda F(z,y)+(d^2(y,z)-d^2(x,z))\geq 0\}.$$
Then, using \cite[Theorem 4.1]{Bento2022}, \cite[Theorem 1.1]{Bento2024},  and \cite[Corollary 1]{Bento2022AOR}, we arrive at the following result:

\begin{theorem} \label{ExistThm}
	Let $C$ be a nonempty, closed and geodesically convex subset of an Hadamard manifold $\M$ and let $F:C \times C \to \R$ be a bifunction such that Assumptions (A1) - (A4) hold. Let $\lambda \in (0,\infty)$. Then
		\item[(i)] $\ep(F,C)\neq \emptyset$,
		\item[(ii)] $J_\lambda^F(x)\neq \emptyset$ for all $x\in \M$,
		\item[(iii)] $J_\lambda^F$ is single-valued,
		\item[(iv)] the set $K_x\neq \emptyset$ for each $x\in C$,
		\item[(iv)]  the set $K_x$ is a singleton for each $x\in C$.
\end{theorem}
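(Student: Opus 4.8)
The statement collects five well-definedness assertions, and the plan is to treat them as three existence claims, (i), (ii) and (iv), and two uniqueness claims, (iii) and (v), each resting on the equilibrium-existence machinery on Hadamard manifolds together with the monotonicity hypothesis (A1). The unifying observation is that (ii) and (iv) are themselves equilibrium problems for \emph{regularized} bifunctions, so (i) furnishes the template and the added regularization upgrades mere existence to uniqueness.

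For the existence of a solution of $\ep(F,C)$ in (i), I would invoke a KKM argument. Define the set-valued map $T(y) := \{x \in C : F(x,y) \ge 0\}$ and aim to show $\bigcap_{y \in C} T(y) \ne \emptyset$. Geodesic convexity of $F(x,\cdot)$ from (A2) together with $F(x,x)=0$ shows that $T$ is a KKM map, upper semicontinuity of $F(\cdot,y)$ from (A3) makes each $T(y)$ closed, and the coercivity hypothesis (A4) supplies the compactness needed to conclude the intersection is nonempty; this is precisely \cite[Corollary 1]{Bento2022AOR}. For (ii) I would run the same argument on the regularized bifunction $G(z,y) := \lambda F(z,y) + d(z,x)\, b_{\gamma_{z,x}}(y)$. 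Since $\gamma_{z,x}$ is parametrized by arc length starting at $z$, one has $b_{\gamma_{z,x}}(z)=0$, so $G(z,z)=0$; moreover $G(z,\cdot)$ is geodesically convex because $b_{\gamma_{z,x}}(\cdot)$ is convex and $d(z,x)\ge 0$, while the regularizing term reinforces the coercivity needed to control $z$. Applying the existence result to $G$ yields $J_\lambda^F(x)\ne\emptyset$, which is \cite[Theorem 4.1]{Bento2022}. Claim (iv) is the analogous statement for the squared-distance regularization defining $K_x$, obtained in the same way from \cite[Theorem 1.1]{Bento2024}.

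For the uniqueness claims (iii) and (v), I would take two candidate solutions $z_1,z_2$, substitute $y=z_2$ in the inequality defining membership of $z_1$ and $y=z_1$ in the inequality for $z_2$, and add the two. Monotonicity (A1) gives $F(z_1,z_2)+F(z_2,z_1)\le 0$, so the $F$-terms drop out and one is left with a sum of the two regularization cross-terms, namely $d(z_1,x)\,b_{\gamma_{z_1,x}}(z_2)+d(z_2,x)\,b_{\gamma_{z_2,x}}(z_1)\ge 0$ in the Busemann case and the corresponding squared-distance expression in the case of $K_x$. The goal is then a purely geometric inequality showing this sum is $\le 0$, with equality forcing $z_1=z_2$; this is the nonlinear counterpart of the Euclidean identity $\langle z_1-x,\,z_2-z_1\rangle+\langle z_2-x,\,z_1-z_2\rangle = -d^2(z_1,z_2)$. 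For the Busemann term I would derive it from geodesic convexity of $b_{\gamma_{z,x}}$ together with $\|\grad b_{\gamma_{z,x}}\|=1$, and for the squared-distance term from the law-of-cosines inequality in Remark \ref{triangle-ineq}.

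The main obstacle lies in these last geometric inequalities and in the noncompact case of existence, where there is no linear shortcut. Verifying that $z\mapsto d(z,x)\,b_{\gamma_{z,x}}(y)$ is upper semicontinuous is delicate because the geodesic ray $\gamma_{z,x}$ itself varies with the base point $z$, and pinning down the sign and strictness of the cross-term sum is exactly the nonlinear content that replaces the inner-product algebra available over $\R^m$. These are the technical cores carried out in \cite[Theorem 4.1]{Bento2022}, \cite[Theorem 1.1]{Bento2024}, and \cite[Corollary 1]{Bento2022AOR}; accordingly, the proof reduces to checking that Assumptions (A1)--(A4) supply the hypotheses of those results and assembling the five conclusions.
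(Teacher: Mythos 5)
Your reduction matches the paper exactly: the paper offers no argument for this theorem at all, simply attributing the five assertions to \cite[Theorem 4.1]{Bento2022}, \cite[Theorem 1.1]{Bento2024} and \cite[Corollary 1]{Bento2022AOR}, so your closing paragraph --- check that (A1)--(A4) supply the hypotheses of those results and assemble the conclusions --- is precisely the paper's ``proof.'' Your supplementary sketch of the internals is sound for (i)--(iv), but the uniqueness argument you propose for (v) does not go through as stated. If $z_1,z_2\in K_x$ and the regularization term is $d^2(y,x)-d^2(z,x)$ (as in Algorithm \ref{MAlg2}; the displayed definition of $K_x$ in the paper appears to contain a typo), then substituting $y=z_2$ into the inequality for $z_1$ and $y=z_1$ into the inequality for $z_2$ and adding makes the regularization cross-terms cancel \emph{identically}: one is left with only $\lambda\bigl(F(z_1,z_2)+F(z_2,z_1)\bigr)\geq 0$, which combined with monotonicity gives an equality rather than the strictly negative quantity you need to force $z_1=z_2$. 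The pairing argument is the right one for the Busemann resolvent (iii), where the Euclidean model of the cross-term sum is $-\|z_1-z_2\|^2$, but it is specific to that regularization, which is affine in $y$. For $K_x$ the correct route exploits the strong geodesic convexity of $d^2(\cdot,x)$: test both membership inequalities at the midpoint $m$ of $z_1$ and $z_2$, use $d^2(m,x)\leq \tfrac12 d^2(z_1,x)+\tfrac12 d^2(z_2,x)-\tfrac14 d^2(z_1,z_2)$ together with the geodesic convexity of $F(z_i,\cdot)$ from (A2), and add; monotonicity then leaves $-\tfrac12 d^2(z_1,z_2)\geq 0$, whence $z_1=z_2$. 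Since the paper delegates all of this to the cited references, this flaw does not affect the validity of the reduction itself, only of your sketched justification of claim (v).
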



	
	\begin{theorem} [\cite{Tan2024}, Theorem 4.1]
		Let $C$ be a nonempty, closed and geodesically convex subset of an Hadamard manifold $\M$ and let $F:C\times C\to \R$ be a $\beta$-strongly monotone bifunction such that  Assumptions (A2) and (A3) hold. Then equilibrium problem \eqref{equib-prob} has a unique solution.
	\end{theorem}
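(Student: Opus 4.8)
The plan is to handle uniqueness and existence separately, and to obtain existence not by a fresh fixed-point argument but by observing that $\beta$-strong monotonicity, together with Assumptions (A2) and (A3) and the standing normalization $F(x,x)=0$, already forces the full hypothesis list (A1)--(A4) of the existence theorem stated earlier in the excerpt (the one concluding $\ep(F,C)\neq\emptyset$). Existence would then follow by simply invoking that theorem, and uniqueness by a short direct computation.

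First I would dispose of uniqueness, which is immediate. Suppose $x_1^*,x_2^*\in C$ both solve \eqref{equib-prob}. Then $F(x_1^*,x_2^*)\geq 0$ and $F(x_2^*,x_1^*)\geq 0$, so adding gives $F(x_1^*,x_2^*)+F(x_2^*,x_1^*)\geq 0$. On the other hand, $\beta$-strong monotonicity yields $F(x_1^*,x_2^*)+F(x_2^*,x_1^*)\leq -\beta d^2(x_1^*,x_2^*)$. Hence $\beta d^2(x_1^*,x_2^*)\leq 0$, forcing $d(x_1^*,x_2^*)=0$, i.e. $x_1^*=x_2^*$.

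For existence I would first record the easy reductions: the normalization $F(x,x)=0$ is part of the standing setup, and $F(x,y)+F(y,x)\leq -\beta d^2(x,y)\leq 0$ is exactly monotonicity, so (A1) holds, while (A2) and (A3) are assumed outright. The real work is verifying (A4). Fix any $\bar y\in C$. By (A2) the geodesically convex function $F(\bar y,\cdot)\colon C\to\R$ is subdifferentiable, so I would choose $u\in\partial F(\bar y,\cdot)(\bar y)$; the subdifferential inequality together with the Cauchy--Schwarz inequality then gives $F(\bar y,w)\geq\langle u,\exp_{\bar y}^{-1}w\rangle\geq-\|u\|\,d(\bar y,w)$ for every $w\in C$. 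Now let $\{z_n\}\subset C$ satisfy $d(z_n,z_0)\to\infty$; by the triangle inequality $d(z_n,\bar y)\to\infty$ as well. Combining strong monotonicity with the lower bound just derived yields
\[ F(z_n,\bar y)\leq -F(\bar y,z_n)-\beta d^2(z_n,\bar y)\leq \|u\|\,d(z_n,\bar y)-\beta d^2(z_n,\bar y)=d(z_n,\bar y)\bigl(\|u\|-\beta\,d(z_n,\bar y)\bigr). \]
Since $d(z_n,\bar y)\to\infty$, the right-hand side is negative for all large $n$, so $F(z_n,\bar y)\leq 0$ eventually, which is precisely (A4) with $x^*=\bar y$. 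With (A1)--(A4) in force, the existence theorem above delivers $\ep(F,C)\neq\emptyset$, and combined with the uniqueness argument this proves the claim.

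The step I expect to be the main obstacle — indeed the only genuinely nontrivial one — is the verification of (A4), and within it the guarantee that $\partial F(\bar y,\cdot)(\bar y)$ is nonempty with a finite-norm element. This is exactly where Assumption (A2) (subdifferentiability of $F(\bar y,\cdot)$) is essential, and it is what turns the quadratic decay coming from $\beta$-strong monotonicity into a genuine coercivity statement. A fully self-contained alternative, bypassing the earlier theorem, would instead exhaust $C$ by the compact geodesically convex sets $C_r=\overline{B}(x_0,r)\cap C$, solve the equilibrium problem on each $C_r$ via a Ky Fan / Colao-type existence result \cite{Colao2012} (using (A2), (A3) and $F(x,x)=0$), invoke the same subgradient estimate to confine all solutions $x_r$ to a fixed ball independent of $r$, and finally use geodesic convexity of $F(x_r,\cdot)$ together with the interiority of $x_r$ in $C_r$ to upgrade the local equilibrium inequality to the global one $F(x_r,y)\geq 0$ for all $y\in C$.
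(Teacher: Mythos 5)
The paper offers no proof of this statement at all: it is imported verbatim as Theorem 4.1 of \cite{Tan2024}, so there is nothing internal to compare your argument against. Judged on its own terms, your proof is correct and is a nice way to make the result self-contained within this paper's framework. The uniqueness half is the standard two-line argument and is fine. For existence, your reduction to the earlier existence theorem (the one asserting $\ep(F,C)\neq\emptyset$ under (A1)--(A4)) is legitimate: (A1) follows because $\beta$-strong monotonicity trivially implies monotonicity and $F(x,x)=0$ is hypothesized, (A2)--(A3) are assumed, and your verification of the coercivity condition (A4) is the only substantive step. That step is sound: (A2) guarantees $\partial F(\bar y,\cdot)(\bar y)\neq\emptyset$, the subdifferential inequality with $F(\bar y,\bar y)=0$ and Cauchy--Schwarz give the linear lower bound $F(\bar y,w)\geq -\|u\|\,d(\bar y,w)$, and combining this with the quadratic decay from strong monotonicity yields $F(z_n,\bar y)\leq d(z_n,\bar y)\bigl(\|u\|-\beta\,d(z_n,\bar y)\bigr)<0$ for large $n$, since $d(z_n,\bar y)\geq d(z_n,z_0)-d(z_0,\bar y)\to\infty$. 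What this buys over simply citing \cite{Tan2024} is that the strongly monotone case is exhibited as a special case of the hypotheses already in force elsewhere in the paper, rather than as a separate external result; the cost is that your proof is only as strong as the quoted existence theorem under (A1)--(A4), which is itself cited from the Bento et al.\ papers. Your sketched alternative (exhaustion by closed balls via a Ky Fan/Colao-type result plus the same subgradient estimate to bound the approximate solutions) is closer to the standard direct proof one would expect in \cite{Tan2024} itself, and would remove that dependence if fully written out.
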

	
	\begin{definition}[\cite{Ferreira2002}]
		Let $C$ be a nonempty subset of a complete metric space $X$. A sequence $\{x_n\}\subset X$ is called Fej\'er monotone with respect to $C$ if
		\[d(x_{n+1},y)\leq d(x_n,y) \text{ for all } y \in C \text{ and for all } n \in \N_0.\]
	\end{definition}
	
	\begin{lemma} [\cite{Ferreira2002}]\label{FejerLemma}
		Let $C$ be a nonempty subset of a complete metric space $X$. If $\{x_n\}\subset X$ is Fej\'er monotone with respect to $C$, then $\{x_n\}$ is bounded. In addition, if  $\{x_n\}$ has a cluster point  $x$  in  $C$, then $\{x_n\}$ converges to $x$.
	\end{lemma}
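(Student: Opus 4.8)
The plan is to treat the two assertions separately, since each follows directly from the defining inequality $d(x_{n+1},y)\le d(x_n,y)$.

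For boundedness, I would first invoke the hypothesis that $C$ is nonempty to fix some point $y\in C$. Applying the Fej\'er monotonicity inequality inductively in $n$ then yields $d(x_n,y)\le d(x_{n-1},y)\le\cdots\le d(x_0,y)$ for every $n\in\N_0$. Consequently the entire sequence $\{x_n\}$ is contained in the closed ball centered at $y$ of radius $d(x_0,y)$, which is precisely what it means for $\{x_n\}$ to be bounded.

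For the convergence statement, suppose $\{x_n\}$ admits a cluster point $x$ with $x\in C$. Here the key move is to exploit the hypothesis $x\in C$ by specializing $y=x$ in the definition of Fej\'er monotonicity; this shows that the real sequence $\{d(x_n,x)\}$ is nonincreasing. Being nonincreasing and bounded below by $0$, it converges to some limit $L\ge 0$. Since $x$ is a cluster point, there is a subsequence $\{x_{n_k}\}$ with $x_{n_k}\to x$, whence $d(x_{n_k},x)\to 0$. As every subsequence of a convergent real sequence must share its limit, we obtain $L=0$, and therefore $d(x_n,x)\to 0$, that is, $x_n\to x$.

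This lemma is essentially routine, so there is no substantial obstacle; the only point meriting attention is the logical role of the hypothesis $x\in C$. It is exactly this membership that permits the choice $y=x$ and thereby makes $\{d(x_n,x)\}$ monotone, which is what upgrades the mere existence of a convergent subsequence into convergence of the full sequence. Without $x\in C$ the monotonicity of $d(x_n,x)$ would be unavailable, and the conclusion could fail.
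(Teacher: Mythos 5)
Your proof is correct and is the standard argument; the paper itself states this lemma without proof, and your two-step reasoning (bounding the whole sequence in the ball of radius $d(x_0,y)$ around a fixed $y\in C$, then using monotonicity of $d(x_n,x)$ together with a null subsequence to force the limit to be $0$) is exactly the argument one would expect. Your closing remark correctly identifies that the hypothesis $x\in C$ is what licenses the choice $y=x$ and hence the monotonicity that upgrades subsequential convergence to full convergence.
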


	\section{Regularized extragradient methods for equilibrium problems}

		%

	In this section we present two regularized extragradient methods for solving  equilibrium problem \eqref{equib-prob} on Hadamard manifolds.

	\begin{Algorithm} \label{MAlg} Let $C$ be a nonempty, closed and  geodesically convex subset of an Hadamard manifold $\M$ and let $F:C \times C \to \R$ be a bifunction.\\
		\textbf{Initialization:} Choose $x_0\in C$ and $\lambda_0\in (0,\infty)$.\\
		\textbf{Iterative Steps:} Given the current iterate $x_n$, compute the next iterate as follows:
		\begin{equation}
			\begin{cases}
				y_n =\left\{x\in C: \lambda_nF(x,y)+  d(x,x_n)b_{\gamma_{x,x_n}}(y)\geq 0 \text{ for all } y\in C\right\}=J_{\lambda_n}^F(x_n),\\
				x_{n+1} = \amin_{y \in C} \left\{ F(y_n, y) + \frac{1}{2\lambda_n} d^2(x_n, y) \right\}= \prox_{ \lambda_n F(y_n,\cdot)}(x_n) \text{ for all } n \in \N_0,
			\end{cases}
		\end{equation}
		where $\{\lambda_n\}$ is a sequence in $(0,\infty)$ such that $\limsup_{n\to \infty}\frac{1}{\lambda_n} \geq \frac{1}{\lambda}$ for some $\lambda \in (0,\infty)$.
	\end{Algorithm}

	\begin{Algorithm} \label{MAlg2} Let $C$ be a nonempty, closed and geodesically convex subset of an Hadamard manifold $\M$ and let $F:C \times C \to \R$ be a bifunction.\\
		\textbf{Initialization:} Choose $x_0\in C$ and $\lambda_0\in (0,\infty)$.\\
		\textbf{Iterative Steps:} Given the current iterate $x_n$, compute the next iterate as follows:
		\begin{equation}
			\begin{cases}
				y_n = \left\{x\in C:\lambda_n F(x,y)+(d^2(y,x_n)-d^2(x,x_n))\geq 0 \text{ for all } y\in C\right\},\\
				x_{n+1} = \amin_{y \in C} \left\{ F(y_n, y) + \frac{1}{2\lambda_n} d^2(x_n, y) \right\} =\prox_{ \lambda_n F(y_n,\cdot)(x_n)} \text{ for all } n \in \N_0,
			\end{cases}
		\end{equation}
		where $\{\lambda_n\}$ is a sequence in $(0,\infty)$ such that $\limsup_{n\to \infty}\frac{1}{\lambda_n} \geq \frac{1}{\lambda}$ for some $\lambda \in (0,\infty)$.
	\end{Algorithm}

	The well-definedness of Algorithms~\ref{MAlg} and~\ref{MAlg2} follows from Theorem~\ref{ExistThm}. Next, we prove an important lemma that is essential for the convergence analysis of Algorithm \ref{MAlg}.

	\begin{lemma}\label{Key-lemma} Let $C$ be a nonempty, closed and  geodesically convex subset of an Hadamard manifold $\M$ and let $F:C \times C \to \R$ be a bifunction satisfying
		Assumptions (A1)-(A4). Let $\{x_n\}$ and $\{y_n\}$ be sequences generated by Algorithm \ref{MAlg}.  Then, for each $x^* \in \ep(F,C)$, we have
		\begin{equation}\label{key-lem-eq}
			d^2(x_{n+1}, x^*) \leq d^2(x_n, x^*) -  d^2(x_n, y_n) - d^2(x_{n+1}, y_n).
		\end{equation}
		Furthermore, $\{x_n\}$ is Fej\'er monotone with respect to  $\ep(F,C)$ and the sequences $\{x_n\}$ and $\{y_n\}$ are bounded.
	\end{lemma}

	\begin{proof}
		Using the definition of $x_{n+1}$ in Algorithm \ref{MAlg} and Remark \ref{subdif-rem}, we have
		\begin{equation}\label{lem-prox-eq}
			\lambda_n (F(y_n, y) - F(y_n, x_{n+1})) \geq  \langle \exp^{-1}_{x_{n+1}} x_n, \exp^{-1}_{x_{n+1}} y \rangle \text{ for all } y \in C.
		\end{equation}
		Using  the definition of $y_{n}$ and $J_{\lambda_n}^F(x_n)$, we have
		\begin{equation}\label{lem-resolv-eq1}
			0\leq \lambda_n F(y_n,y)+ d(y_n,x_n) b_{\gamma_{y_n,x_n}}(y) \text{ for all } y \in C.
		\end{equation}
		Since $b_{\gamma_{y_n,x_n}}(y) \leq d(y,x_n)-d(x_n,y_n)$ for all $y\in C$, by \eqref{lem-resolv-eq1}, we have
		\begin{equation}\label{lem-resolv-eq2}
			0\leq \lambda_n F(y_n,y)+ d(y_n,x_n) [d(y,x_n)-d(x_n,y_n)] \text{ for all } y \in C.
		\end{equation}
		Let $G_n(y_n,\cdot):\M \to \R$ be  the function defined by
		\[G_n(y_n,y)\coloneqq \lambda_n F(y_n,y)+ d(y_n,x_n) [d(y,x_n)-d(x_n,y_n)] \text{ for all } y \in C.\]
		Then $G_n(y_n,y_n)=0$. It follows from \eqref{lem-resolv-eq2}  that 
		$$y_n=\amin_{y\in C}G_n(y_n,y),$$
		which implies that  $0\in \partial G_n({y_n,y_n})$. It follows from Remark \ref{dist-grad-rem} that
		\begin{eqnarray*}
			0\in \partial G_n({y_n,y_n})&=&\partial (\lambda_n F(y_n,y)+ d(y_n,x_n) [d(y,x_n)-d(x_n,y_n)] )\big|_{y_n}\\
			&=& \lambda_n \partial F(y_n,y_n)+\partial (d(y_n,x_n) [d(y,x_n)-d(x_n,y_n)] )\big|_{y_n}\\
			&=& \lambda_n\partial F(y_n,y_n)+  d(y_n,x_n) (-d^{-1}(y_n,x_n)\exp_{y_n}^{-1}x_n)\\
			&=& \lambda_n\partial F(y_n,y_n)-\exp_{y_n}^{-1}x_n.
		\end{eqnarray*}
		Thus $(1/\lambda_n)\exp_{y_n}^{-1}x_n \in \partial F(y_n,y_n)$. By the definition of $ \partial F(y_n,y_n)$, we have
		\begin{align}\label{lem-resolv-eq33}
			\langle \exp_{y_n}^{-1}x_n, \exp_{y_n}^{-1}x \rangle \leq \lambda_n(F(y_n,x)-F(y_n,y_n))= \lambda_n F(y_n,x) \text{ for all } x\in \M.
		\end{align}
		Setting $x = x_{n+1} \in C$ in \eqref{lem-resolv-eq33}, we obtain
		\begin{equation}\label{lem-resolv-eq3}
			\lambda_n  F(y_n, x_{n+1}) \geq  \langle \exp^{-1}_{y_n} x_n, \exp^{-1}_{y_n} x_{n+1} \rangle.
		\end{equation}
		Using \eqref{lem-prox-eq} and \eqref{lem-resolv-eq3}, we find that
		\begin{align}
			\langle \exp_{y_{n}}^{-1}x_n,\exp_{y_{n}}^{-1}x_{n+1} \rangle+ \langle \exp_{x_{n+1}}^{-1}x_n,\exp_{x_{n+1}}^{-1}y \rangle & \leq \lambda_n (F(y_n,x_{n+1})+F(y_n,y)-F(y_n,x_{n+1}))\nonumber\\
			&= \lambda_n F(y_n,y)\text{ for all } y \in C. \label{lem-prox-eq4}
		\end{align}
		Using \eqref{triangle-ineq} for $x_n,x_{n+1},y \in \M$, one sees that
		\begin{align}\label{tngl-eq1}
			2\langle \exp_{x_{n+1}}^{-1}x_n,\exp_{x_{n+1}}^{-1}y \rangle \geq d^2(x_n,x_{n+1})+d^2(x_{n+1},y)-d^2(x_n,y) \text{ for all } y \in C.
		\end{align}
		Again, using \eqref{triangle-ineq} for $x_n,y_n,x_{n+1}\in \M$, one sees that
		\begin{align}\label{tngl-eq2}
			2\langle \exp_{y_{n}}^{-1}x_n,\exp_{y_{n}}^{-1}x_{n+1} \rangle \geq d^2(x_n,y_{n})+d^2(x_{n+1},y_n)-d^2(x_n,x_{n+1}).
		\end{align}
		Using \eqref{lem-prox-eq4}, \eqref{tngl-eq1} and \eqref{tngl-eq2}, we have
		\[	d^2(x_n,y_n)+ d^2(x_{n+1},y_n)- d^2(x_n,x_{n+1})
		+ d^2(x_n,x_{n+1})+d^2(x_{n+1},y)-d^2(x_n,y) \leq 2\lambda_n F(y_n,y),\]
		which implies that
		\begin{eqnarray}\label{lem-prof-eq1}
			d^2(x_{n+1},y)&\leq& d^2(x_n,y)- d^2(x_n,y_n)- d^2(x_{n+1},y_n) +2 \lambda_n F(y_n,y) \text{ for all } y \in C.
		\end{eqnarray}
		Let $x^*\in \ep(F,C)$ and put $y=x^*$ in \eqref{lem-prof-eq1}. Since $y_n \in C$, we obtain $F(x^*,y_n)\geq 0$. When combined with the monotonicity of $F$,  this  yields $F(y_n,x^*)\leq 0$. Thus, using \eqref{lem-prof-eq1}, we have
		\begin{equation}\label{lem-prof-eq1s}
			d^2(x_{n+1},x^*)\leq d^2(x_n,x^*) -d^2(x_n,y_n)-d^2(x_{n+1},y_n),
		\end{equation}
		which implies that
		\begin{equation}\label{key-eq}
			d(x_{n+1},x^*)\leq d(x_n,x^*) \text{ for all }  n\in \N_0.
		\end{equation}
		Since $x^* \in \ep(F,C)$ is arbitrary, this means that $\{x_n\}$ is Fej\'er monotone with respect to $\ep(F,C)$.
		By Lemma \ref{FejerLemma}, the sequence $\{x_n\}$ is bounded, and $\lim_{n \to \infty} d(x_n,x^*)$ exists. By taking the limit as $n \to \infty$ in \eqref{lem-prof-eq1s}, one arrives at
		\begin{equation}\label{lim-eq}
			\lim_{n \to \infty} d(x_n,y_n)=0.
		\end{equation}
		Thus, the sequence $\{y_n\}$ is also bounded.
	\end{proof}

	\begin{theorem}
		Let $C$ be a nonempty, closed and geodesically convex subset of an Hadamard manifold $\M$ and let $F:C \times C \to \R$ be a bifunction satisfying
		Assumptions (A1)-(A4).	Let $\{x_n\}$ be the sequence generated by Algorithm \ref{MAlg} where $\limsup_{n\to \infty}\frac{1}{\lambda_n} \geq \frac{1}{\lambda}$ for some $\lambda \in (0,\infty)$. Then $\{x_n\}$ converges to an element of $\ep(F,C)$.
	\end{theorem}
	
	\begin{proof}
		In view of Lemma  \ref{Key-lemma}, the sequence  \( \{x_n\} \) is Fej\'{e}r monotone with respect to the solution set \( \ep(F,C)\) and the sequences $\{x_n\}$ and $\{y_n\}$ are bounded.
		Since \( \{x_n\} \) is bounded, it has at least one cluster point. Let \( \bar{x} \) be such a cluster point, and let \( \{x_{n_k}\} \) be a subsequence such that \( x_{n_k} \to \bar{x} \) as \( k \to \infty \).  From \eqref{lim-eq}, we have $\lim_{k \to \infty}d(x_{n_k},y_{n_k})=0$. Thus, \( y_{n_k} \to \bar{x} \) as \( k \to \infty \).
		Using \eqref{lem-resolv-eq33}, we see that 
		\[
		F(y_{n_k}, y) \geq \frac{1}{\lambda_{n_k}}  \langle \exp_{y_{n_k}}^{-1}(x_{n_k}), \exp_{y_{n_k}}^{-1}(y) \rangle \text{ for all } y \in C.
		\]
		Taking the limit superior as \( k \to \infty \) and using Assumption (A3) and Lemma \ref{exp-rem}, we obtain
		\[
		F(\bar{x}, y) \geq \limsup_{k\to \infty} F(y_{n_k},y) \geq  \limsup_{k\to \infty} \frac{1}{\lambda_{n_k}}  \langle \exp_{y_{n_k}}^{-1}(x_{n_k}), \exp_{y_{n_k}}^{-1}(y) \rangle =0 \text{ for all } y \in C,
		\]
		which implies that \(\bar{x} \in \ep(F,C) \).
		Thus, every cluster point of \( \{x_n\} \) belongs to \( \ep(F,C) \). Since $\lim_{n \to \infty}d(x_n,x^*)$ exists for any $x^*\in \ep(F,C)$, the entire sequence \( \{x_n\} \) converges to \( \bar{x} \in \ep(F,C) \).
	\end{proof}

	We now turn to the convergence analysis of Algorithm~\ref{MAlg2}, for which the following lemma is essential.

	\begin{lemma}\label{Key-lemma2} Let $C$ be a nonempty, closed and  geodesically convex subset of an Hadamard manifold $\M$ and let $F:C \times C \to \R$ be a bifunction satisfying
		Assumptions (A1)-(A4). Let $\{x_n\}$ and $\{y_n\}$ be the sequences generated by Algorithm \ref{MAlg2}.  Then, for each $x^* \in \ep(F,C)$, we have
		\begin{equation}\label{key-lem-eq2}
			d^2(x_{n+1}, x^*) \leq d^2(x_n, x^*) -  d^2(x_n, y_n) - d^2(x_{n+1}, y_n).
		\end{equation}
		Furthermore, $\{x_n\}$ is Fej\'er monotone with respect to the solution set $\ep(F,C)$ and the sequences $\{x_n\}$ and $\{y_n\}$ are bounded.
	\end{lemma}

	\begin{proof}
		According to the definition of $x_{n+1}$ in Algorithm \ref{MAlg} and Remark \ref{subdif-rem}, we have
		\begin{equation}\label{lem-prox-eq2}
			\lambda_n (F(y_n, y) - F(y_n, x_{n+1})) \geq \langle \exp^{-1}_{x_{n+1}} x_n, \exp^{-1}_{x_{n+1}} y \rangle \text{ for all } y \in C.
		\end{equation}
		From the definition of $y_{n}$ and $J_{\lambda_n}^F(x_n)$, it follows that
		\begin{equation}\label{lem-resolv-eq12}
			0\leq \lambda_nF(y_n,y)+(d^2(y,x_n)-d^2(y_n,x_n)) \text{ for all } y \in C.
		\end{equation}
		Let $G_n(y_n,\cdot):\M \to \R$ be the function defined by
		\[G_n(y_n,y)=\lambda_n F(y_n,y)+ (d^2(y,x_n)-d^2(y_n,x_n)) \text{ for all } y \in C.\]
		Then $G_n(y_n,y_n)=0$. Using \eqref{lem-resolv-eq12}, we find that 
		$$y_n=\amin_{y\in C}G_n(y_n,y),$$
		which implies that  $0\in \partial G_n({y_n,y_n})$. It follows from Remark \ref{dist-grad-rem} that
		\begin{align*}
			0\in \partial G_n({y_n,y_n})&=\partial (\lambda_n F(y_n,y)+ (d^2(y,x_n)-d^2(y_n,x_n)) )\big|_{y_n}\\
			&=\lambda_n \partial F(y_n,y_n)+\partial (d^2(y,x_n)-d^2(y_n,x_n))\big|_{y_n}\\
			&= \lambda_n\partial  F(y_n,y_n)- \exp_{y_n}^{-1}x_n.
		\end{align*}
		Thus $(1/\lambda_n)\exp_{y_n}^{-1}x_n \in \partial F(y_n,y_n)$. By the definition of $ \partial F(y_n,y_n)$, we have
		\begin{align}\label{lem-resolv-eq332}
			\frac{1}{\lambda_n} \langle \exp_{y_n}^{-1}x_n, \exp_{y_n}^{-1}x \rangle \leq F(y_n,x)-F(y_n,y_n)= F(y_n,x) \text{ for all } x\in \M.
		\end{align}
		Setting $x = x_{n+1} \in C$ in \eqref{lem-resolv-eq332}, we obtain
		\begin{equation}\label{lem-resolv-eq32}
			\lambda_n F(y_n, x_{n+1}) \geq  \langle \exp^{-1}_{y_n} x_n, \exp^{-1}_{y_n} x_{n+1} \rangle.
		\end{equation}
		Using \eqref{lem-prox-eq2} and \eqref{lem-resolv-eq32}, we have
		\begin{align}
			\langle \exp_{y_{n}}^{-1}x_n,\exp_{y_{n}}^{-1}x_{n+1} \rangle+\langle \exp_{x_{n+1}}^{-1}x_n,\exp_{x_{n+1}}^{-1}y \rangle & \leq \lambda_n (F(y_n,x_{n+1})+f(y_n,y)-F(y_n,x_{n+1}))\nonumber\\
			&= \lambda_n f(y_n,y)\text{ for all } y \in C. \label{lem-prox-eq42}
		\end{align}
		Using \eqref{triangle-ineq} for $x_n,x_{n+1},y\in \M$, one sees that
		\begin{align}\label{tngl-eq12}
			2\langle \exp_{x_{n+1}}^{-1}x_n,\exp_{x_{n+1}}^{-1}y \rangle \geq d^2(x_n,x_{n+1})+d^2(x_{n+1},y)-d^2(x_n,y) \text{ for all } y \in C.
		\end{align}
		Again, using \eqref{triangle-ineq} for $x_n,y_n,x_{n+1}\in \M$, one sees that
		\begin{align}\label{tngl-eq22}
			2\langle \exp_{y_{n}}^{-1}x_n,\exp_{y_{n}}^{-1}x_{n+1} \rangle \geq d^2(x_n,y_{n})+d^2(x_{n+1},y_n)-d^2(x_n,x_{n+1}).
		\end{align}
		Using \eqref{lem-prox-eq42}, \eqref{tngl-eq12} and \eqref{tngl-eq22}, we have
		\[	d^2(x_n,y_n)+ d^2(x_{n+1},y_n)- d^2(x_n,x_{n+1})
		+ d^2(x_n,x_{n+1})+d^2(x_{n+1},y)-d^2(x_n,y) \leq 2\lambda_n f(y_n,y),\]
		which implies that
		\begin{eqnarray}\label{lem-prof-eq12}
			d^2(x_{n+1},y)&\leq& d^2(x_n,y)- d^2(x_n,y_n)- d^2(x_{n+1},y_n) +2 \lambda_n F(y_n,y) \text{ for all } y \in C.
		\end{eqnarray}
		Let $x^*\in \ep(F,C)$ and put $y=x^*$ in \eqref{lem-prof-eq12}. Since $y_n \in C$, we obtain $F(x^*,y_n)\geq 0$. When combined with the monotonicity of $F$, this shows that $F(y_n,x^*)\leq 0$. It now follows from  \eqref{lem-prof-eq12} that
		\begin{align}\label{key-eq2}
			d^2(x_{n+1},x^*)\leq d^2(x_n,x^*) -d^2(x_n,y_n)-d^2(x_{n+1},y_n),
		\end{align}
		which implies that
		\begin{equation}
			d(x_{n+1},x^*)\leq d(x_n,x^*) \text{ for all }  n\in \N_0.
		\end{equation}
		Since $x^* \in \ep(F,C)$ was arbitrary, this means that $\{x_n\}$ is Fej\'er monotone with respect to $\ep(F,C)$.
		By Lemma \ref{FejerLemma}, the sequence $\{x_n\}$ is bounded, and $\lim_{n \to \infty} d(x_n,x^*)$ exists. By taking the limit as $n \to \infty$ in \eqref{key-eq2}, one arrives at
		\begin{equation}\label{lim-eq2}
			\lim_{n \to \infty} d(x_n,y_n)=0.
		\end{equation}
		Thus, $\{y_n\}$ is also bounded.
	\end{proof}

	\begin{theorem}
		Let $C$ be a nonempty, closed and geodesically convex subset of an Hadamard manifold $\M$ and let $F:C \times C \to \R$ be a bifunction satisfying the
		Assumptions (A1)-(A4).	Let $\{x_n\}$ be the sequence generated by Algorithm \ref{MAlg2} where $\limsup_{n\to \infty}\frac{1}{\lambda_n} \geq \frac{1}{\lambda}$ for some $\lambda \in (0,\infty)$. Then $\{x_n\}$ converges to an element of $\ep(F,C)$.
	\end{theorem}
	
	\begin{proof}
		It follows from  Lemma \ref{Key-lemma2} that \( \{x_n\} \) is Fej\'{e}r monotone with respect to the solution set \( \ep(F,C)\) and  the sequences $\{x_n\}$ and $\{y_n\}$ are bounded.
		Since \( \{x_n\} \) is bounded, it has at least one cluster point. Let \( \bar{x} \) be a cluster point, and let \( \{x_{n_k}\} \) be a subsequence such that \( x_{n_k} \to \bar{x} \) as \( k \to \infty \).  Using \eqref{lim-eq2}, we have $\lim_{k \to \infty}d(x_{n_k},y_{n_k})=0$. Thus, \( y_{n_k} \to \bar{x} \) as \( k \to \infty \).
		Using \eqref{lem-resolv-eq332}, we have
		\[
		F(y_{n_k}, y) \geq \frac{1}{\lambda_{n_k}}  \langle \exp_{y_{n_k}}^{-1}(x_{n_k}), \exp_{y_{n_k}}^{-1}(y) \rangle \text{ for all } y \in C.
		\]
		Taking the limit as \( k \to \infty \), and using Assumption (A4) and Lemma \ref{exp-rem}, we have
		\[
		F(\bar{x}, y) \geq \limsup_{k\to \infty} f(y_{n_k},y) \geq  \limsup_{k\to \infty}   \frac{1}{\lambda_{n_k}}  \langle \exp_{y_{n_k}}^{-1}(x_{n_k}), \exp_{y_{n_k}}^{-1}(y) \rangle =0 \text{ for all } y \in C,
		\]
		which implies that \(\bar{x} \in \ep(F,C) \).
		Thus, every cluster point of \( \{x_n\} \) belongs to \( \ep(F,C) \). Since $\lim_{n \to \infty}d(x_n,x^*)$ exists for any $x^*\in \ep(F,C)$, the entire sequence \( \{x_n\} \) converges to \( \bar{x} \in \ep(F,C) \).
	\end{proof}

	\section{Error bounds and linear convergence for strongly pseudomonotone equilibrium problems}
	
	
	In this section we establish a global error bound and demonstrate the linear convergence of the proposed extragradient methods (Algorithms \ref{MAlg} and \ref{MAlg2}) for solving strongly pseudomonotone equilibrium problems.
	
	%
	
	\subsection{Global error bounds }
	In this subsection we find error bounds for the proposed Algorithms \ref{MAlg} and \ref{MAlg2} when the involved bifunction $F$ is strongly pseudomonotone.
	
	\begin{theorem}\label{Errbdd-thm}
		Let $C$ be a nonempty, closed and  geodesically convex subset of an Hadamard manifold $\M$ and let $F:C\times C\to \R$ be a $\beta$-strongly pseudomonotone bifunction such that $F(x,x)=0$ for all $x\in C$ and  Assumptions (A2) and (A3) hold. Let $x^*\in \ep(F,C)$ be the unique solution to the EP and let $\{x_n\}$ be the sequence generated by Algorithm \ref{MAlg}. Then
		\[d(x_n,x^*)\leq \left(1+\frac{1}{\beta \lambda_n}\right) d(y_n,x_n) \text{ for all } n \in \N_0.\]
	\end{theorem}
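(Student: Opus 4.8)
The plan is to exploit the resolvent characterization \eqref{lem-resolv-eq33} established in the proof of Lemma \ref{Key-lemma}, namely $\langle \exp_{y_n}^{-1}x_n, \exp_{y_n}^{-1}x\rangle \leq \lambda_n F(y_n,x)$ for all $x \in \M$. Its derivation used only the geodesic convexity and subdifferentiability of $F(y_n,\cdot)$ (Assumption (A2)) together with the defining inequality of $y_n = J_{\lambda_n}^F(x_n)$, so it remains valid under the hypotheses of the present theorem, where (A1) and (A4) are dropped. First I would set $x = x^*$ in this inequality to obtain $\langle \exp_{y_n}^{-1}x_n, \exp_{y_n}^{-1}x^*\rangle \leq \lambda_n F(y_n,x^*)$.

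Next I would convert the right-hand side into a nonpositive quantity using strong pseudomonotonicity. Since $x^* \in \ep(F,C)$ and $y_n \in C$, we have $F(x^*, y_n) \geq 0$; the $\beta$-strong pseudomonotonicity of $F$ then forces $F(y_n, x^*) \leq -\beta d^2(y_n, x^*)$. Substituting this bound yields $\langle \exp_{y_n}^{-1}x_n, \exp_{y_n}^{-1}x^*\rangle \leq -\lambda_n \beta\, d^2(y_n, x^*)$.

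The crux of the argument is to turn this inner-product bound into a distance bound. Applying the Cauchy--Schwarz inequality in $T_{y_n}\M$ and using $\|\exp_{y_n}^{-1}x_n\| = d(y_n,x_n)$ and $\|\exp_{y_n}^{-1}x^*\| = d(y_n,x^*)$, I would estimate $\langle \exp_{y_n}^{-1}x_n, \exp_{y_n}^{-1}x^*\rangle \geq -d(y_n,x_n)\,d(y_n,x^*)$. Chaining the two bounds gives $\lambda_n \beta\, d^2(y_n, x^*) \leq d(y_n,x_n)\,d(y_n,x^*)$; dividing by $d(y_n,x^*)$ when it is positive (the inequality being trivial otherwise) produces $d(y_n,x^*) \leq \tfrac{1}{\beta\lambda_n}\, d(y_n,x_n)$. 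Finally, the triangle inequality for the Riemannian distance, $d(x_n,x^*) \leq d(x_n,y_n) + d(y_n,x^*)$, combined with the previous estimate, delivers $d(x_n,x^*) \leq \bigl(1 + \tfrac{1}{\beta\lambda_n}\bigr) d(y_n,x_n)$, as claimed.

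I expect the only genuinely delicate points to be confirming the validity of \eqref{lem-resolv-eq33} in this weaker assumption regime and handling the degenerate case $y_n = x^*$; the remaining steps are routine applications of Cauchy--Schwarz and the triangle inequality, so no serious obstacle is anticipated.
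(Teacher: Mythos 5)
Your proposal is correct and follows essentially the same route as the paper's proof: specialize \eqref{lem-resolv-eq33} at $x=x^*$, invoke $\beta$-strong pseudomonotonicity to bound $F(y_n,x^*)$ by $-\beta d^2(y_n,x^*)$, apply Cauchy--Schwarz, and finish with the triangle inequality. Your extra remarks on the validity of \eqref{lem-resolv-eq33} under the weaker hypotheses and on the degenerate case $y_n=x^*$ are sensible refinements that the paper leaves implicit, but they do not change the argument.
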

	\begin{proof}
		Using \eqref{lem-resolv-eq33}, we have
		\begin{align}\label{GErr-eq1}
			\langle \exp_{y_n}^{-1}x_n, \exp_{y_n}^{-1}x^* \rangle \leq  	\lambda_n  F(y_n,x^*).
		\end{align}
		Since $x^*\in \ep(F,C)$ and $y_n\in C$, one obtains $F(x^*,y_n)\geq 0$. Note that $F$ is $\beta$-strongly pseudomonotone. Therefore
		\begin{equation}\label{GErr-eq2}
			F(y_n,x^*)\leq -\beta d^2(y_n,x^*).
		\end{equation}
		Using \eqref{GErr-eq1} and \eqref{GErr-eq2}, we have
		\begin{align}
			\langle \exp_{y_n}^{-1}x_n, \exp_{y_n}^{-1}x^* \rangle \leq  -\beta\lambda_n d^2(y_n,x^*).
		\end{align}
		By the Cauchy-Schwarz inequality, we have
		\begin{align*}
			\beta\lambda_n d^2(y_n,x^*) \leq \langle -\exp_{y_n}^{-1}x_n, \exp_{y_n}^{-1}x^* \rangle
			\leq \|\exp_{y_n}^{-1}x_n\| \|\exp_{y_n}^{-1}x^* \|
			= d(y_n,x_n)d(y_n,x^*),
		\end{align*}
		which implies that
		\[d(y_n,x^*)\leq \frac{1}{\beta\lambda_n}d(y_n,x_n).\]
		The triangle inequality implies that
		\[d(x_n,x^*)\leq d(x_n,y_n)+d(y_n,x^*)\leq d(x_n,y_n)+\frac{1}{\beta\lambda_n}d(y_n,x_n)= \left(1+\frac{1}{\beta \lambda_n}\right)d(y_n,x_n).\]
	\end{proof}
	
	\begin{theorem}
		Let $C$ be a nonempty, closed and geodesically convex subset of an Hadamard manifold $\M$ and let $F:C\times C\to \R$ be a $\beta$-strongly pseudomonotone bifunction such that $F(x,x)=0$ for all $x\in C$ and  Assumptions (A2) and (A3) hold. Let $x^*\in \ep(F,C)$ be the unique solution to the EP and let $\{x_n\}$ be the sequence generated by Algorithm \ref{MAlg2}. Then
		\[d(x_n,x^*)\leq \left(1+\frac{1}{\beta \lambda_n}\right) d(y_n,x_n) \text{ for all } n \in \N_0.\]
	\end{theorem}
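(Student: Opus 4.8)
The plan is to follow the proof of Theorem \ref{Errbdd-thm} almost verbatim, since the only difference between Algorithms \ref{MAlg} and \ref{MAlg2} lies in the regularization term, and both regularizations yield the same subgradient characterization of $y_n$. Indeed, the Busemann-function regularization used in Algorithm \ref{MAlg} produces inequality \eqref{lem-resolv-eq33}, while the squared-distance regularization of Algorithm \ref{MAlg2} produces the structurally identical inequality \eqref{lem-resolv-eq332}, namely $\langle \exp_{y_n}^{-1}x_n, \exp_{y_n}^{-1}x \rangle \leq \lambda_n F(y_n,x)$ for all $x \in \M$. I would therefore begin by setting $x = x^*$ in \eqref{lem-resolv-eq332} to obtain $\langle \exp_{y_n}^{-1}x_n, \exp_{y_n}^{-1}x^* \rangle \leq \lambda_n F(y_n,x^*)$.

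Next, since $x^* \in \ep(F,C)$ and $y_n \in C$, we have $F(x^*,y_n) \geq 0$; the $\beta$-strong pseudomonotonicity of $F$ then gives $F(y_n,x^*) \leq -\beta d^2(y_n,x^*)$. Combining this with the previous inequality yields $\langle \exp_{y_n}^{-1}x_n, \exp_{y_n}^{-1}x^* \rangle \leq -\beta\lambda_n d^2(y_n,x^*)$. Applying the Cauchy--Schwarz inequality together with the identities $\|\exp_{y_n}^{-1}x_n\| = d(y_n,x_n)$ and $\|\exp_{y_n}^{-1}x^*\| = d(y_n,x^*)$, I obtain $\beta\lambda_n d^2(y_n,x^*) \leq d(y_n,x_n)\, d(y_n,x^*)$, and hence $d(y_n,x^*) \leq \frac{1}{\beta\lambda_n} d(y_n,x_n)$. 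A final application of the triangle inequality $d(x_n,x^*) \leq d(x_n,y_n) + d(y_n,x^*)$ then delivers the claimed bound $d(x_n,x^*) \leq \left(1 + \frac{1}{\beta\lambda_n}\right) d(y_n,x_n)$.

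There is no genuine obstacle here: the estimate is identical to that of Theorem \ref{Errbdd-thm}, and the argument carries over to Algorithm \ref{MAlg2} precisely because \eqref{lem-resolv-eq332} has the same form as \eqref{lem-resolv-eq33}. The only point meriting a moment's care is to confirm that the characterization $(1/\lambda_n)\exp_{y_n}^{-1}x_n \in \partial F(y_n,y_n)$ --- established in the proof of Lemma \ref{Key-lemma2} via the subdifferential of the squared-distance regularizer --- remains available under Assumptions (A2) and (A3) alone, without invoking the monotonicity assumption (A1) or (A4), since the present theorem drops those hypotheses. As that derivation uses only the geodesic convexity and subdifferentiability of $F(y_n,\cdot)$ together with the differentiability of $d^2(\cdot,x_n)$, it stays valid in the strongly pseudomonotone setting, and strong pseudomonotonicity then supplies exactly the sign estimate $F(y_n,x^*) \leq -\beta d^2(y_n,x^*)$ that replaces the role of monotonicity in the argument.
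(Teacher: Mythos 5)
Your proposal is correct and follows exactly the route the paper intends: the paper's own proof of this theorem is a one-line deferral to Theorem \ref{Errbdd-thm}, and your argument carries that proof over to Algorithm \ref{MAlg2} by replacing \eqref{lem-resolv-eq33} with its structural twin \eqref{lem-resolv-eq332}, then applying strong pseudomonotonicity, Cauchy--Schwarz, and the triangle inequality in the same order. Your added observation that the characterization $(1/\lambda_n)\exp_{y_n}^{-1}x_n \in \partial F(y_n,y_n)$ needs only (A2) and (A3), not (A1) or (A4), is a careful touch the paper leaves implicit.
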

	\begin{proof}
		The proof follows similar steps to those in the proof of Theorem \ref{Errbdd-thm}.
	\end{proof}
	
	\subsection{Linear convergence}

	In this subsection we establish the $R$-linear convergence rates of the proposed Algorithms \ref{MAlg} and \ref{MAlg2} when the involved bifunction $F$ is strongly pseudomonotone.
	
	\begin{definition}
		A sequence $\{x_n\}$ in an Hadamard manifold $\M$ is said to converge $R$-linearly to $x^*$ with rate $\eta \in [0,1)$ if there exists a constant $c>0$ such that $d(x_n,x^*)\leq c\eta^n$ for all $n \in \N$.
	\end{definition}

	\begin{theorem}\label{Rlinear-thm}
		Let $C$ be a nonempty, closed and geodesically  convex subset of an Hadamard manifold $\M$ and let $F:C\times C\to \R$ be a $\beta$-strongly pseudomonotone bifunction such that $F(x,x)=0$ for all $x\in C$ and  Assumptions (A2) and (A3) hold. Let $\{x_n\}$ be the sequence generated by Algorithm \ref{MAlg} where $\{\lambda_n\}\subset (\tilde{\lambda},\infty)$ and $\limsup_{n\to \infty}\frac{1}{\lambda_n} \geq \frac{1}{\lambda}$ for some $\lambda, \tilde{\lambda} \in (0,\infty)$. Then $\{x_n\}$ converges $R$-linearly to the unique solution of EP with the rate $r=\sqrt{1-\min\{1,2 \beta \tilde{\lambda}\}}$.
	\end{theorem}

	\begin{proof}
		Under the above assumption, the equilibrium problem $EP(F, C)$ has a unique solution, say $x^*$. Since $x^*\in \ep(F,C)$  and $y_n\in C$, one obtains $F(x^*,y_n)\geq 0$, which, when combined with the $\beta$-strong pseudomonotonicity of $F$, implies that $F(y_n,x^*)\leq -\beta d^2(y_n,x^*)$. Letting $y=x^*$ in \eqref{lem-prof-eq1}, one has
		\begin{align}
			d^2(x_{n+1},x^*)&\leq d^2(x_n,x^*)- d^2(x_n,y_n)- d^2(x_{n+1},y_n) +2 \lambda_n F(y_n,x^*)\\ \label{Rlin-eq1}
			&\leq d^2(x_n,x^*)- d^2(x_n,y_n)- d^2(x_{n+1},y_n) -2 \beta \lambda_n d^2(y_n,x^*).\nonumber\\
			&\leq d^2(x_n,x^*)- d^2(x_n,y_n) -2 \beta \lambda_n d^2(y_n,x^*).\nonumber\\
			&\leq d^2(x_n,x^*)- d^2(x_n,y_n) -2 \beta \tilde{\lambda} d^2(y_n,x^*).\nonumber\\
			&\leq d^2(x_n,x^*)- \min\{1,2 \beta \tilde{\lambda}\} (d^2(x_n,y_n)+d^2(y_n,x^*)).\nonumber\\
			& \leq d^2(x_n,x^*)- \min\{1,2 \beta \tilde{\lambda}\} (d^2(x_n,x^*)).\nonumber\\
			&= (1-\min\{1,2 \beta \tilde{\lambda}\})d^2(x_n,x^*)=r^2d^2(x_n,x^*),
		\end{align}
		where $r=\sqrt{(1-\min\{1,2 \beta \tilde{\lambda}\})} \in [0,1)$. Thus, we have
		\begin{equation}
			d(x_{n+1},x^*)\leq rd(x_n,x^*) \leq \cdots \leq r^{n+1}d(x_0,x^*).
		\end{equation}
		That is,
		\[d(x_{n+1},x^*) \leq cr^{n+1},\]
		where $c=d(x_0,x^*)$. In other words, $\{x_n\}$ is $R$-linearly convergent with rate $r$.
	\end{proof}

	\begin{theorem}
		Let $C$ be a nonempty, closed and geodesically convex subset of an Hadamard manifold $\M$ and let $F:C\times C\to \R$ be a $\beta$-strongly pseudomonotone bifunction such that $F(x,x)=0$ for all $x\in C$ and  Assumptions (A2) and (A3) hold. Let $\{x_n\}$ be the sequence generated by Algorithm \ref{MAlg2} where $\{\lambda_n\}\subset (\tilde{\lambda},\infty)$ and $\limsup_{n\to \infty}\frac{1}{\lambda_n} \geq \frac{1}{\lambda}$ for some $\lambda, \tilde{\lambda} \in (0,\infty)$. Then $\{x_n\}$ converges $R$-linearly to the unique solution of EP with the rate $r=\sqrt{(1-\min\{1,2 \beta \tilde{\lambda}\})}$.
	\end{theorem}
	\begin{proof} 
		The proof follows similar steps to those in the proof of Theorem \ref{Rlinear-thm}.
	\end{proof}

	\section{Numerical Experiments}
	
	In this section we present  numerical examples to illustrate the computational performance of our proposed algorithms and compare them with existing ones. All codes were written in MATLAB 2024b and executed on a MacBook Air with Intel Core i5-5350U (1.8 GHz, dual-core) processor and 8 GB RAM.
	
	Let $\R^N_{++}=\{x=(x_1,\ldots, x_N)\in \R^N: x_i>0, ~ i=1,\ldots,N\}$, and let $\M=(\R^N_{++},\langle \cdot,\cdot \rangle)$ be a Riemannian manifold with the
	Riemannian metric defined by
	\begin{equation}\label{NumRM}
		\langle u,v \rangle =uG(x)v^T, \quad x\in \R^N_{++}, u,v \in T_x\R^N_{++}=\R^N,
	\end{equation}
	where $G(x)$ is a diagonal matrix defined by $G(x)= \text{diag}(x_1^{-2},\ldots, x_N^{-2})$.
	The Riemannian distance $d: \M\times \M\to [0,\infty)$ is defined by
	$$d(x,y)= \left(\sum_{i=1}^N \ln^2 \frac{x_i}{y_i}\right)^{1/2} \text{ for all } x, y \in  \M.$$
	The sectional curvature of this Riemannian manifold $\M$ is $0$. Thus $M=(\R^N_{++}, \langle \cdot, \cdot \rangle) $ is an Hadamard manifold; see \cite{Shikher2024NA}.
	
	Let $x,z\in \M$. Then the Busemann function $b_{\gamma_{z,x}} \colon \M \to  \R$ is given by 
	\[b_{\gamma_{z,x}}(y)=\frac{ \sum_{i=1}^N\ln\left(\frac{x_i}{z_i}\right)\ln\left(\frac{z_i}{y_i}\right)}{\left(\sum_{i=1}^N \ln^2 \frac{x_i}{z_i}\right)^{1/2}} \text{ for all } y \in  \M.\]

\begin{example}\rm \label{Exm1}
	
	
	Let $\M$ be the Riemannian manifold with the Riemannian metric defined by \eqref{NumRM}.	Let $N=3$ and 
	let $F:\M \times \M\to \R$ be defined by
	\begin{align*}
		F(x,y)&=\left(3\ln\left(\frac{x_1x_2}{x_3}\right)	\ln\left(\frac{y_1}{x_1}\right)+ 3\ln\left(\frac{x_1x_2}{x_3}\right)	\ln\left(\frac{y_{2}}{x_{2}}\right)\right. \\
		&\quad \left.-3\ln\left(\frac{x_1x_2}{x_3}\right)	\ln\left(\frac{y_3}{x_3}\right)\right)  \text{ for all } x,y\in \M.
	\end{align*}
	Note that $F$ satisfies Assumptions {\it(A1)-(A4)}	and the resolvent $J_\lambda^F:\M\to \M$ is given by
	\[J_\lambda^F(x)=((x_1x_2^{3\lambda} x_3^{3\lambda})^{\frac{1}{1+3\lambda}}, (x_1^{3\lambda} x_2^{-1}x_3^{3\lambda})^{\frac{1}{1+3\lambda}}, (x_1^{3\lambda} x_2^{3\lambda}x_3^{1+6\lambda})^{\frac{1}{1+3\lambda}}) \text{ for all } x\in \M.\]
	%
	For each $x\in \M$, the set $K_x$ is given by 
	\[K_x=\{((x_1x_2^{3\lambda/2} x_3^{3\lambda/2})^{\frac{1}{1+3\lambda/2}}, (x_1^{3\lambda/2} x_2^{-1}x_3^{3\lambda/2})^{\frac{1}{1+3\lambda/2}}, (x_1^{3\lambda/2} x_2^{3\lambda/2}x_3^{1+6\lambda/2})^{\frac{1}{1+3\lambda/2}})\}.\]
\end{example}

We conduct a numerical experiment and apply our Algorithm \ref{MAlg}, denoted by Alg. REMB, and Algorithm \ref{MAlg2}, denoted by Alg. REMD, to find the solution of equilibrium problem \eqref{equib-prob}, where the bifunction is given  by $F$ as defined in Example \ref{Exm1}. We choose the parameter sequence $\{\lambda_n\}$ to be constant, that is, $\lambda_n = \lambda$ for all $n \in \mathbb{N}$. Ten different values of $\lambda$ are selected using the MATLAB command \texttt{3*linspace(0.01, 0.1, 10)}. Figures~\ref{Fig1} and \ref{Fig2} illustrate the results for two different initial points: $x_0 = (1,2,3)$ and $x_0 = (5,5,5)$. Box plots are also presented for different initial points by the MATLAB command \texttt{randi([5, 20], N, 1)} with 30 independent trials, see Figure~\ref{FigBox1} and \ref{FigBox2}.  For all plots and tables, our stopping criterion is defined as $Er(n)=d(x_{n+1},x_n)\leq 10^{-16}$.

Tables~\ref{table1} and \ref{table2} present the mean iteration counts and computational times across multiple trials with varying initial points for Alg. REMB and Alg. REMD,  respectively. As $\lambda$ increases, both the number of iterations and computational time decrease significantly. In particular, the best performance is observed for $\lambda = 0.30$, where Algorithm REMB converges in approximately 31 iterations on average, while Algorithm REMD requires about 22 iterations, indicating faster convergence of REMD compared to REMB. The iteration plots in Figures~\ref{Fig1} and \ref{Fig2} further confirm this trend for the different initial points.

For each value of $\lambda$, we present box plots of the number of iterations and the computational time in  Figures~\ref{FigBox1} and Figure~\ref{FigBox2}, respectively. In each box plot, the central red line denotes the median, while the bottom and top edges of the box correspond to the 25th and 75th percentiles. The whiskers extend to the most extreme data points not considered outliers, and outliers are shown individually using the `+' symbol. As observed in Figures~\ref{FigBox1} (a) and \ref{FigBox2} (a), the number of iterations decreases steadily as $\lambda$ increases, indicating faster convergence for larger values of $\lambda$. A similar trend is evident in Figures~\ref{FigBox1} (b) and \ref{FigBox2} (b), where computational time also decreases significantly with increasing $\lambda$, demonstrating the efficiency gains associated with larger regularization parameters.

\begin{table}
	\centering
		\begin{tabular}{cllcc}
			\hline
			lambda & Mean Iterations & Std Dev & Mean Time (s) & Std Dev \\ \hline
			0.03 & 202 & 3.46 & 0.000686 & 0.000299 \\ 
			0.06 & 110 & 1.65 & 0.000334 & 0.000063 \\ 
			0.09 & 78 & 1.40 & 0.000295 & 0.000536 \\ 
			0.12 & 62 & 0.99 & 0.000180 & 0.000068 \\ 
			0.15 & 51 & 1.22 & 0.000079 & 0.000004 \\ 
			0.18 & 45 & 1.02 & 0.000069 & 0.000002 \\ 
			0.21 & 40 & 0.73 & 0.000064 & 0.000004 \\ 
			0.24 & 36 & 0.50 & 0.000057 & 0.000003 \\ 
			0.27 & 34 & 0.56 & 0.000057 & 0.000010 \\ 
			0.30 & 31 & 0.52 & 0.000049 & 0.000003 \\ 
			\hline
	\end{tabular}
	\caption{Numerical results for Alg. REMB with different values of $\lambda$ and randomly generated initial points}
	\label{table1}
\end{table}

\begin{table}
	\centering
		\begin{tabular}{cllcc}
			\hline
			lambda & Mean Iterations & Std Dev & Mean Time (s) & Std Dev \\ \hline
			0.03 & 204 & 1.68 & 0.000758 & 0.000398 \\ 
			0.06 & 106 & 0.91 & 0.000280 & 0.000078 \\ 
			0.09 & 72 & 0.61 & 0.000118 & 0.000017 \\ 
			0.12 & 55 & 0.32 & 0.000131 & 0.000028 \\ 
			0.15 & 44 & 0.51 & 0.000079 & 0.000019 \\ 
			0.18 & 37 & 0.41 & 0.000063 & 0.000008 \\ 
			0.21 & 32 & 0.00 & 0.000049 & 0.000008 \\ 
			0.24 & 28 & 0.00 & 0.000043 & 0.000003 \\ 
			0.27 & 25 & 0.00 & 0.000039 & 0.000002 \\ 
			0.30 & 22 & 0.41 & 0.000035 & 0.000001 \\ 
			\hline
	\end{tabular}
	\caption{Numerical results for Alg. REMD with different values of $\lambda$ and randomly generated initial points}
	\label{table2}
\end{table}


%
%

\begin{figure}[htbp]
	\centering
	\begin{subfigure}[b]{0.5\textwidth}
		\centering
		\includegraphics[width=\textwidth]{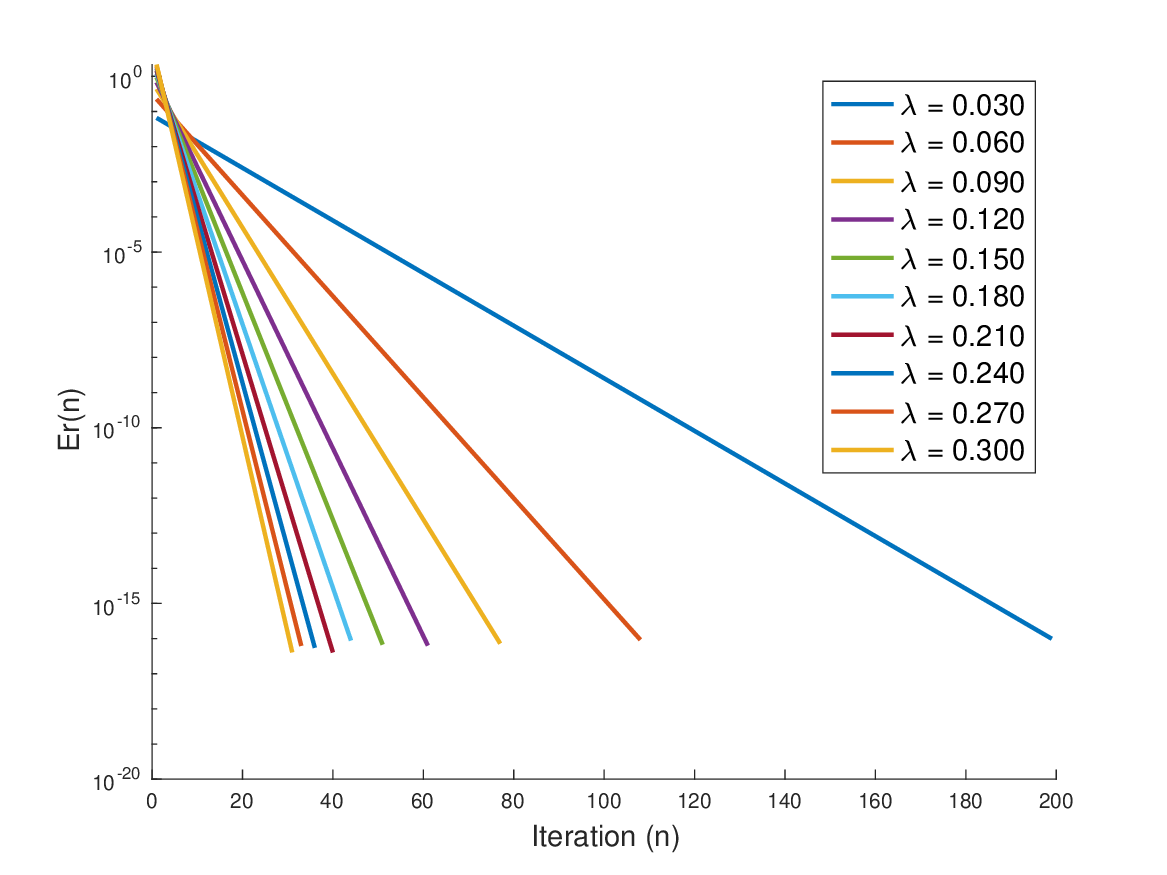}
		\caption{$x_0=(1,2,3)$}
		\label{fig:sub1}
	\end{subfigure}%
	\begin{subfigure}[b]{0.5\textwidth}
		\centering
		\includegraphics[width=\textwidth]{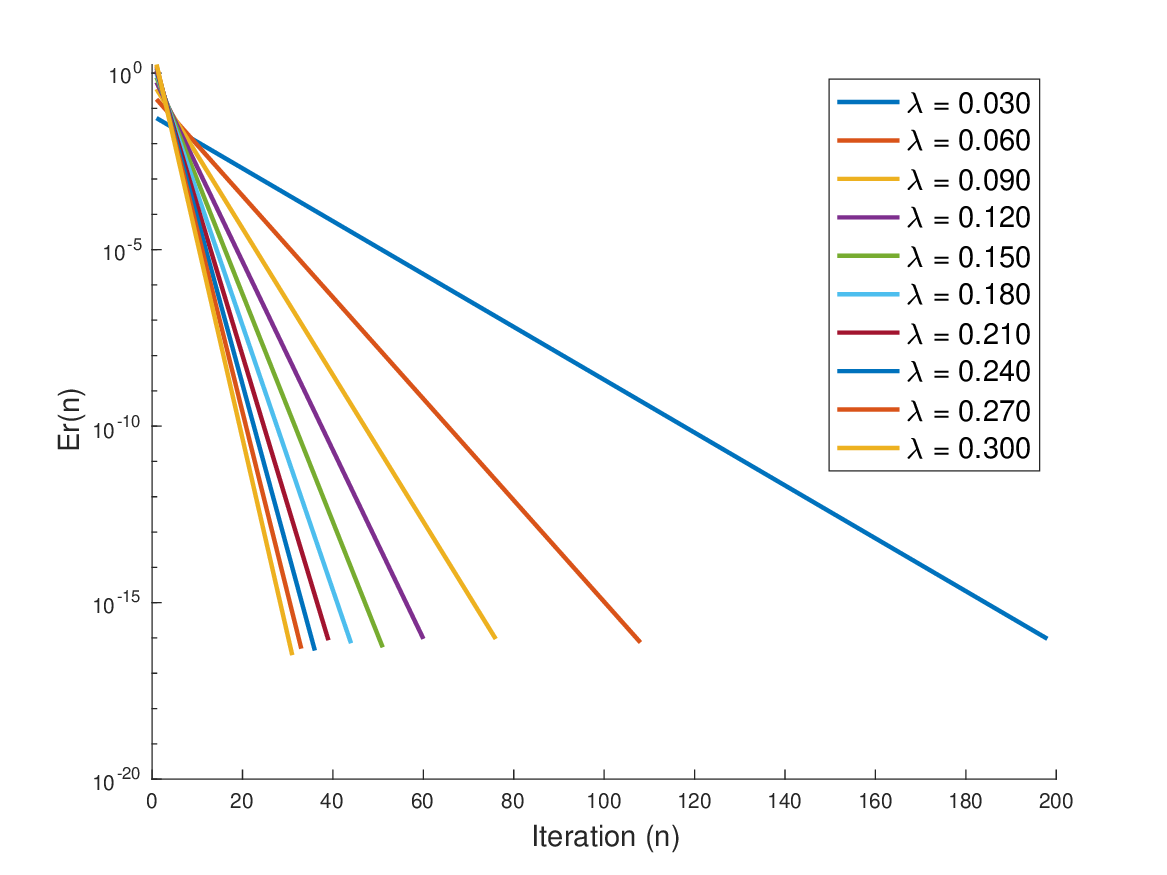}
		\caption{$x_0=(5,5,5)$}
		\label{fig:sub2}
	\end{subfigure}
	\caption{Convergence behavior of Alg. REMB for different values of $\lambda$ with initial points $x_0 = (1, 2, 3)$ and $x_0 = (5, 5, 5)$}
	\label{Fig1}
\end{figure}

\begin{figure}[htbp]
	\centering
	\begin{subfigure}[b]{0.5\textwidth}
		\centering
		\includegraphics[width=\textwidth]{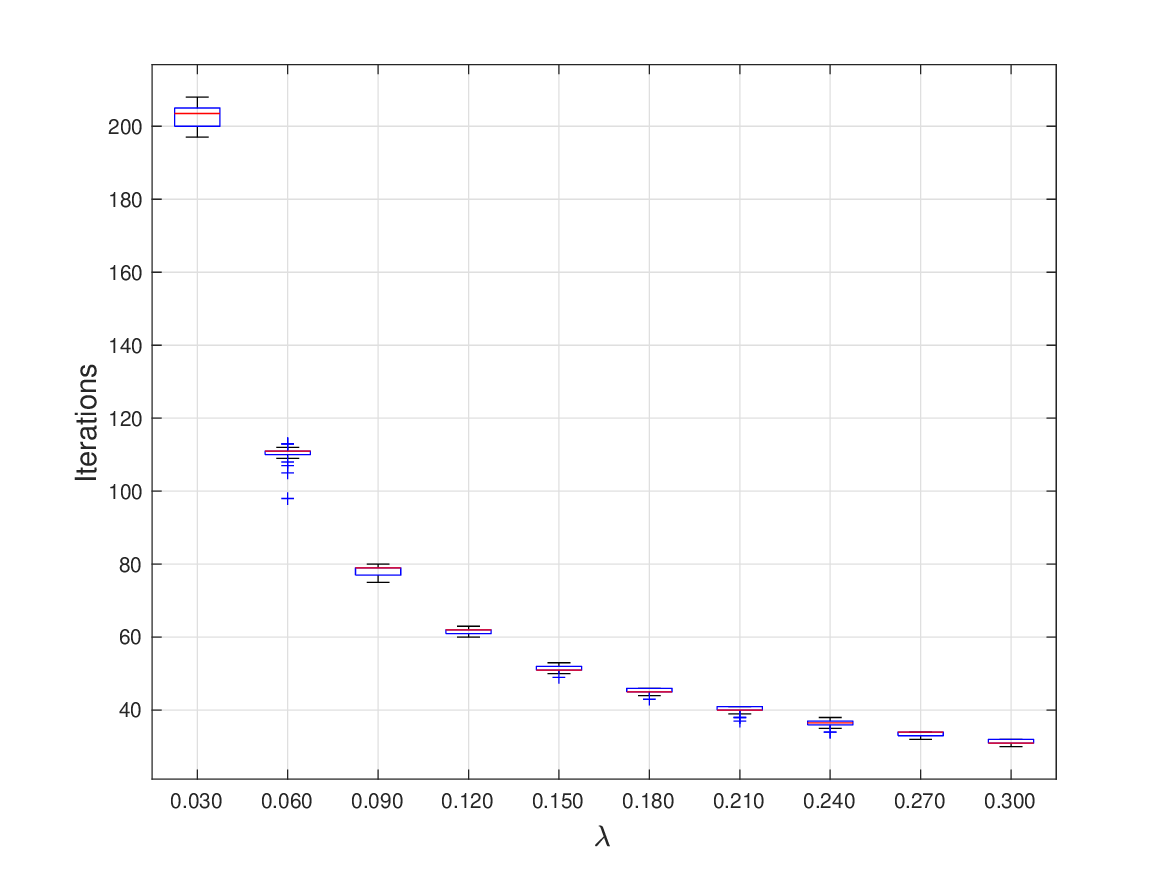}
		\caption{Box plot of iterations}
		\label{fig:BoxBuseItr}
	\end{subfigure}%
	\begin{subfigure}[b]{0.5\textwidth}
		\centering
		\includegraphics[width=\textwidth]{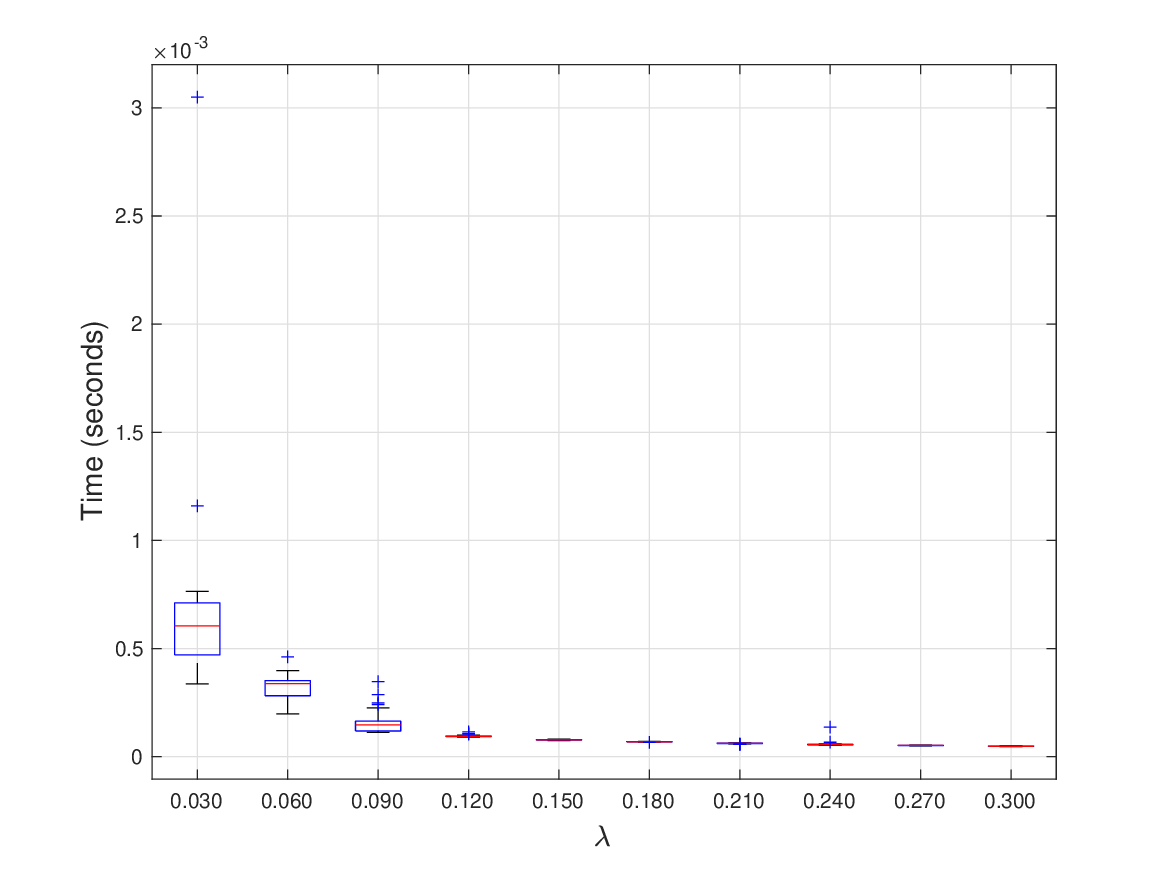}
		\caption{Box plot of computational time}
		\label{fig:BoxBuseCpu}
	\end{subfigure}
	\caption{Box plots of Alg. REMB for different values of $\lambda$ and random initial points}
	\label{FigBox1}
\end{figure}

\begin{figure}[htbp]
	\centering
	\begin{subfigure}[b]{0.5\textwidth}
		\centering
		\includegraphics[width=\textwidth]{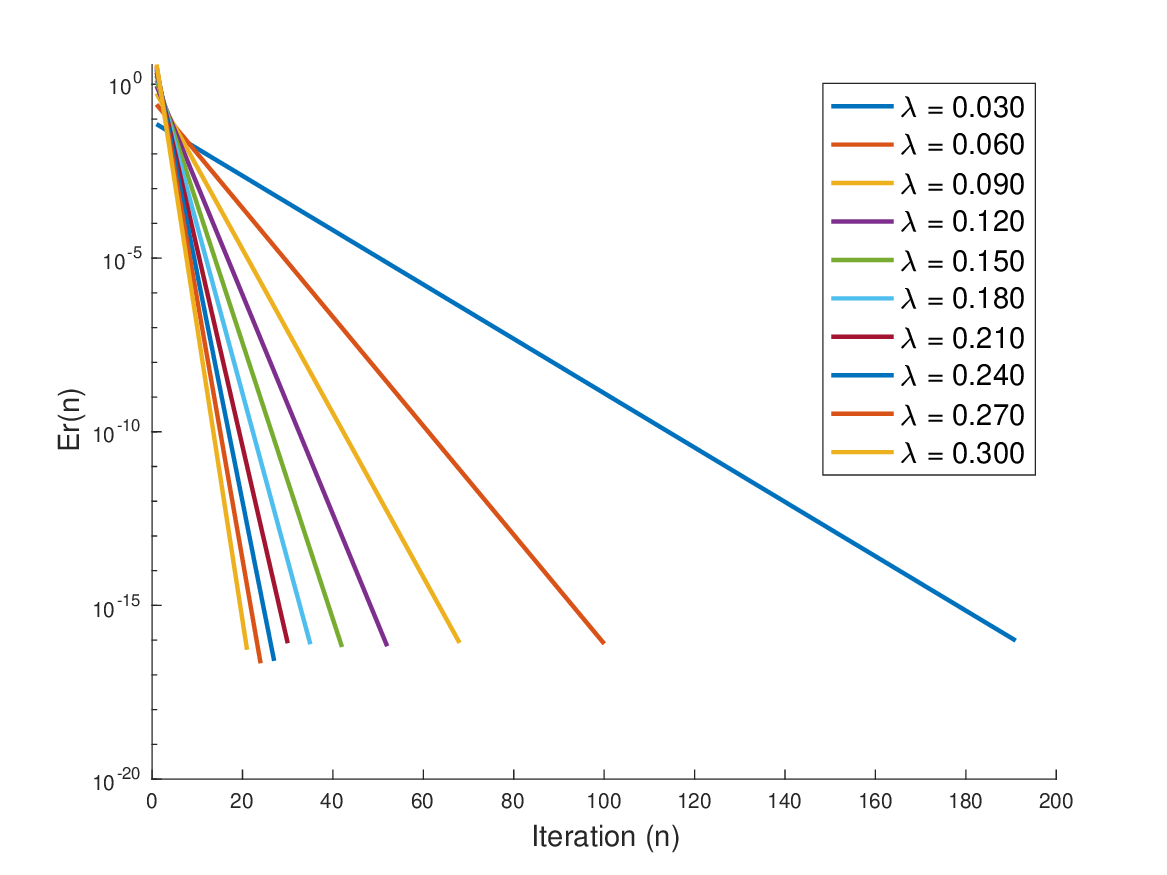}
		\caption{$x_0=(1,2,3)$}
		\label{fig:LDmetric123}
	\end{subfigure}%
	\begin{subfigure}[b]{0.5\textwidth}
		\centering
		\includegraphics[width=\textwidth]{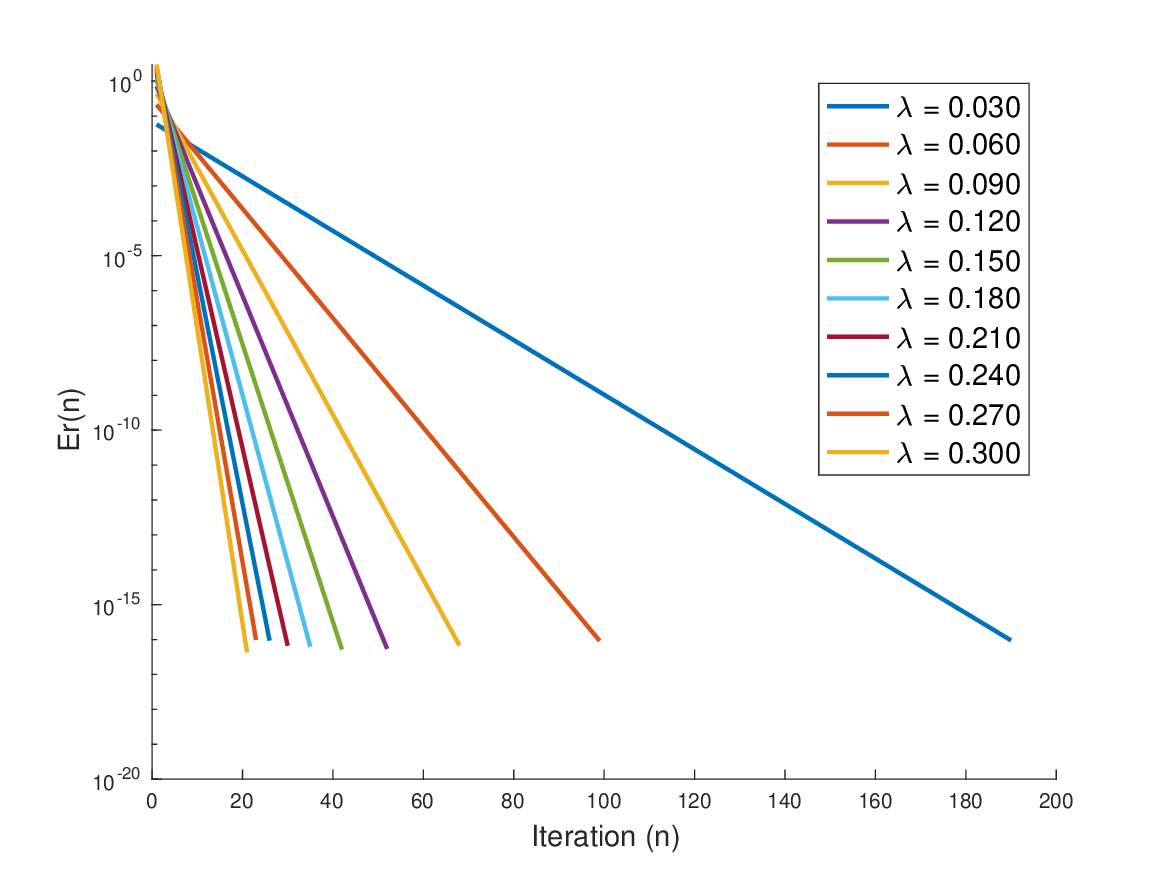}
		\caption{$x_0=(5,5,5)$}
		\label{fig:LDmetric555}
	\end{subfigure}
	\caption{Convergence behavior of Alg. REMD for different values of $\lambda$ with initial points $x_0 = (1, 2, 3)$ and $x_0 = (5, 5, 5)$}
	\label{Fig2}
\end{figure}

\begin{figure}[htbp]
	\centering
	\begin{subfigure}[b]{0.5\textwidth}
		\centering
		\includegraphics[width=\textwidth]{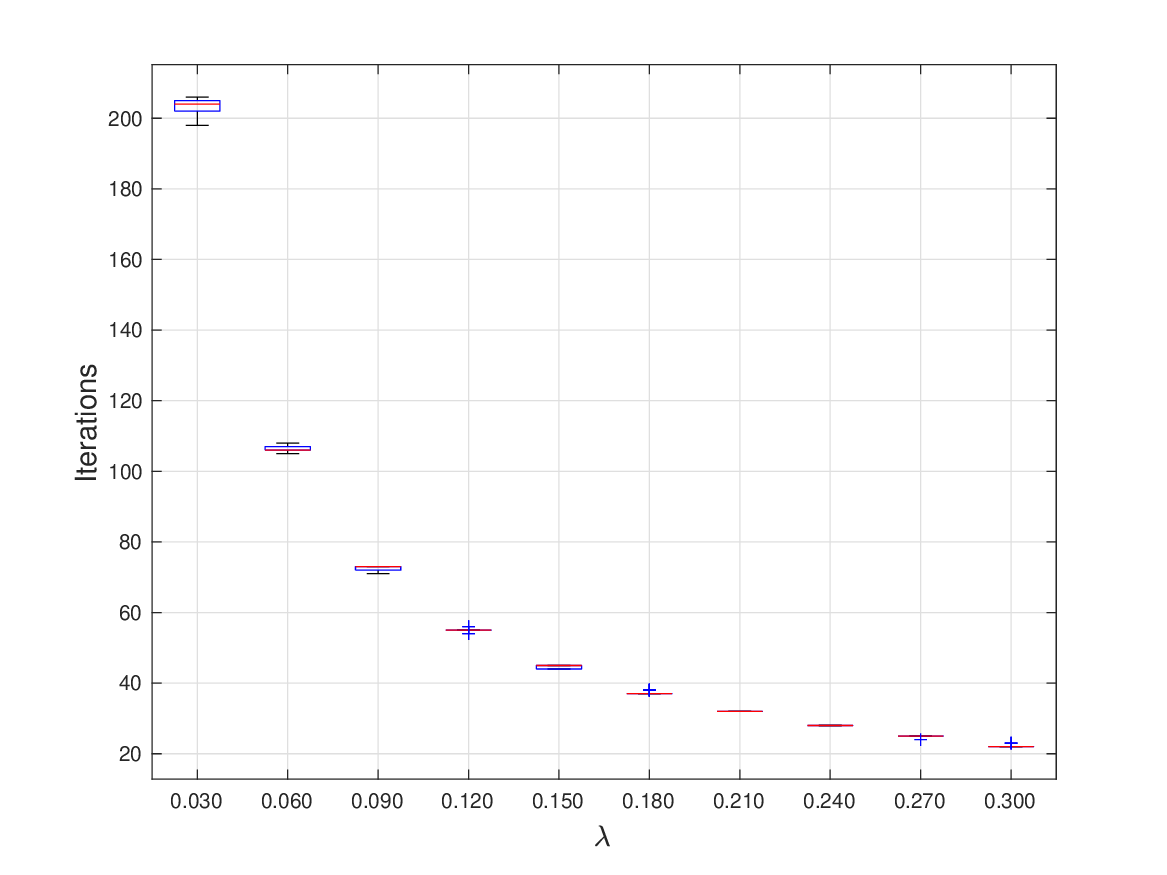}
		\caption{Box plot of iterations}
		\label{fig:BoxDmetricItr}
	\end{subfigure}%
	\begin{subfigure}[b]{0.5\textwidth}
		\centering
		\includegraphics[width=\textwidth]{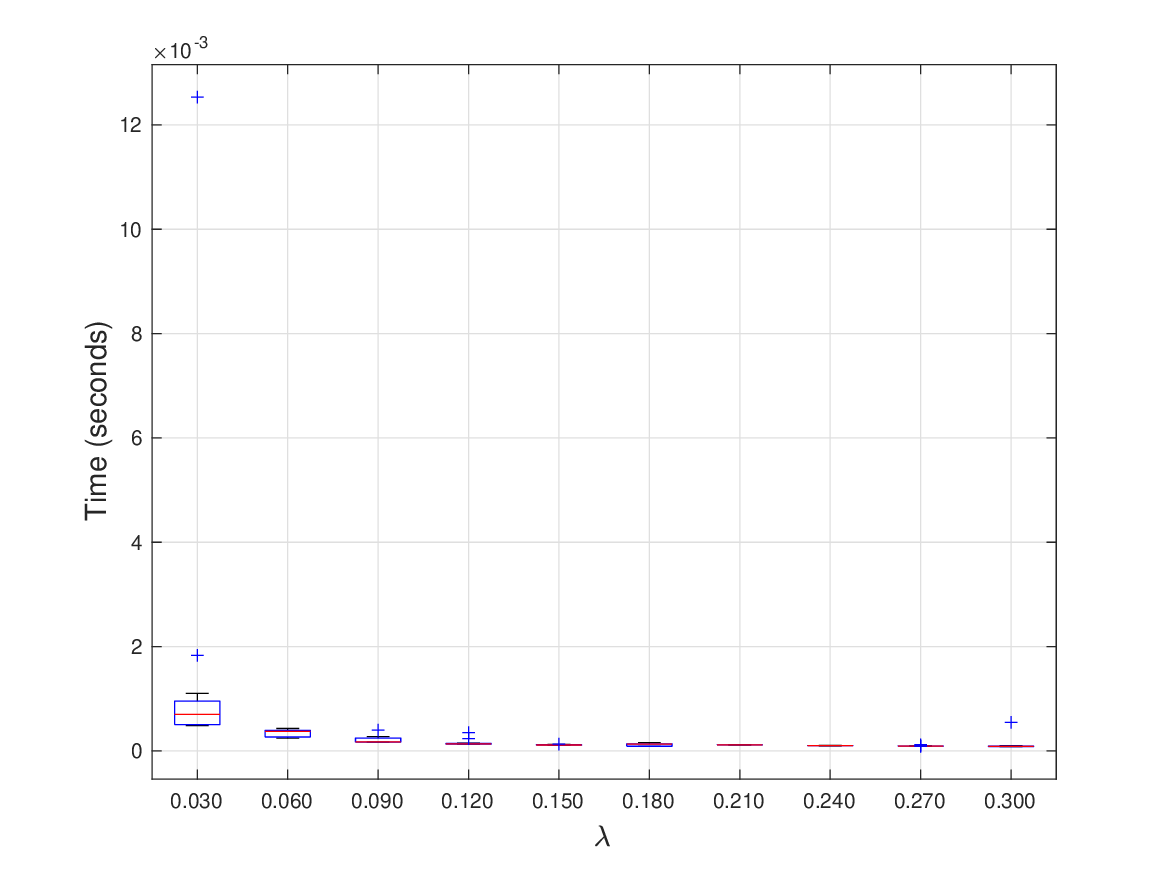}
		\caption{Box plot of computational time}
		\label{fig:BoxDmetricCpu}
	\end{subfigure}
	\caption{Box plots of Alg. REMD for different values of $\lambda$ and random initial points}
	\label{FigBox2}
\end{figure}





In the following example we compare the numerical behavior of Algorithm 3.1 from Tan et al. \cite{Tan2024}, denoted as Alg. TQY, with that of Alg. REMB and Alg. REMD.
\begin{example}\rm 
	
	Let $\M$ be the Riemannian manifold with the Riemannian metric defined by \eqref{NumRM} and let $F:\M \times \M\to \R$ be defined by
	\[F(x,y)=\left(\ln x_1\ln\left(\frac{y_1}{x_1}\right)+ \cdots + \ln x_N\ln\left(\frac{y_N}{x_N}\right)\right) \text{ for all } x,y\in \M.\]
	Then $F$ is monotone and $F(x,x)=0$. Also note that all the conditions $(A2)$-$(A4)$ are satisfied and the resolvent $J_\lambda^F:\M\to \M$ is given by
	\[J_\lambda^F(x)=(x_1^{\frac{1}{1+\lambda}}, \ldots, x_N^{\frac{1}{1+\lambda}}) \text{ for all } x\in \M.\]
	For each $x\in \M$, the set $K_x$ is given by 
	\[K_x=\{(x_1^{\frac{1}{1+\lambda/2}}, \ldots, x_N^{\frac{1}{1+\lambda/2}})\}.\]
\end{example}

%


We set the problem dimension to \( N = 100 \) and \( N = 1000 \) and choose initial point \( x_0 \)  generated randomly using the MATLAB command \texttt{randi([5, 20], N, 1)}. For Alg. REMB and Alg. REMD, we use a constant parameter sequence \( \{\lambda_n\} \) selected via \texttt{3*linspace(0.01, 0.1, 10)}. For Alg. TQY, we set \( \tau_0 \) from the same range, while fixing \( \xi_n = 0 \) and \( \sigma_n = \frac{1}{(n+5)^2} \), to analyze the algorithm's sensitivity to the initial regularization parameter. The stopping criterion is defined by \( Er(n)= d(x_{n+1} , x_n) \leq 10^{-8} \).

Table~\ref{Tab2} presents the average iteration count and computational time for all algorithms across various values of \( \lambda \) (or \( \tau_0 \) for Alg. TQY). The results show a consistent advantage of Alg. REMD over Alg. REMB and Alg. TQY in terms of both number of iterations and computational time. As \( \lambda \) increases, Alg. REMB exhibits significantly fewer iterations and reduced computational time compared to Alg. REMB and Alg.  TQY.


To further analyze the convergence behavior of algorithms, we present iteration plots and box plots illustrating the number of iterations and computational time required to meet the stopping criterion. The iteration plots for the different values of $\lambda$ and dimension $N=100$ and $N=1000$ clearly demonstrate that Alg. REMD and Alg.  REMB exhibit significantly faster and more stable convergence compared to Alg. TQY across all tested values of the regularization parameter, see Figures \ref{Fig100} and \ref{Fig1000}. 
%
The box plots provide additional statistical insight into the variability of performance. For Alg. REMB and Alg. REMD, as \( \lambda \) increases, the spread of the box plots narrows considerably, indicating reduced variability in both iteration count and computational time across multiple independent trials; see Figures \ref{BoxItr} and \ref{BoxCT}. This reflects robust and stable behavior of Alg. REMB and Alg. REMD for larger parameter values. On the other hand, Alg. TQY algorithm exhibits broader boxes and more frequent outliers, suggesting higher variability and less predictable convergence. These visualizations collectively confirm that Alg. REMB and Alg. REMD not only achieve faster convergence but also maintain more consistent performance, particularly when larger regularization parameters are employed.

	\begin{table}
		\centering
		\adjustbox{max width=\textwidth}{
			\begin{tabular}{cllccccccc}
				\hline
				&&&&&Dimension (100)&&&&\\
				\hline
				&& $\lambda=0.06$ &&& $\lambda=0.18$ &&& $\lambda=0.30$ & \\
				Algorithms & TQY & REMB & REMD & TQY & REMB & REMD  & TQY & REMB & REMD \\
				\hline
				Av. Itr.(n) & 593 &  206 &  585  &  73  &  67& 581   & 46   & 40 \\
				Av. Time(s) & 0.22368 & 0.044521 & 0.039749 & 0.022525 & 0.001953 & 0.019044 & 0.001570 & 0.019195 & 0.001044 \\
				\hline \\
				&&&&&Dimension (1000)&&&&\\
				\hline
				&& $\lambda=0.06$ &&& $\lambda=0.18$ &&& $\lambda=0.30$ & \\
				Algorithms & TQY & REMB & REMD & TQY & REMB & REMD  & TQY & REMB & REMD \\
				\hline
				Av. Itr.(n) & 666 & 226 & 219 & 660 & 80 & 73 & 654 & 51 & 44 \\
				Av. Time(s) & 0.043811  &   0.011202  &  0.013328& 0.043704  & 0.0045189   & 0.0050687& 0.041514 &  0.0074997  & 0.0090923 \\
				\hline
		\end{tabular}}
		\caption{Numerical results for Alg. REM, 
			and
			Alg. TQY for $N=100, 1000$ and randomly generated initial points}
		\label{Tab2}
	\end{table}

\begin{figure}[htbp]
	\centering
	\begin{subfigure}[b]{0.45\textwidth}
		\centering
		\includegraphics[width=\textwidth]{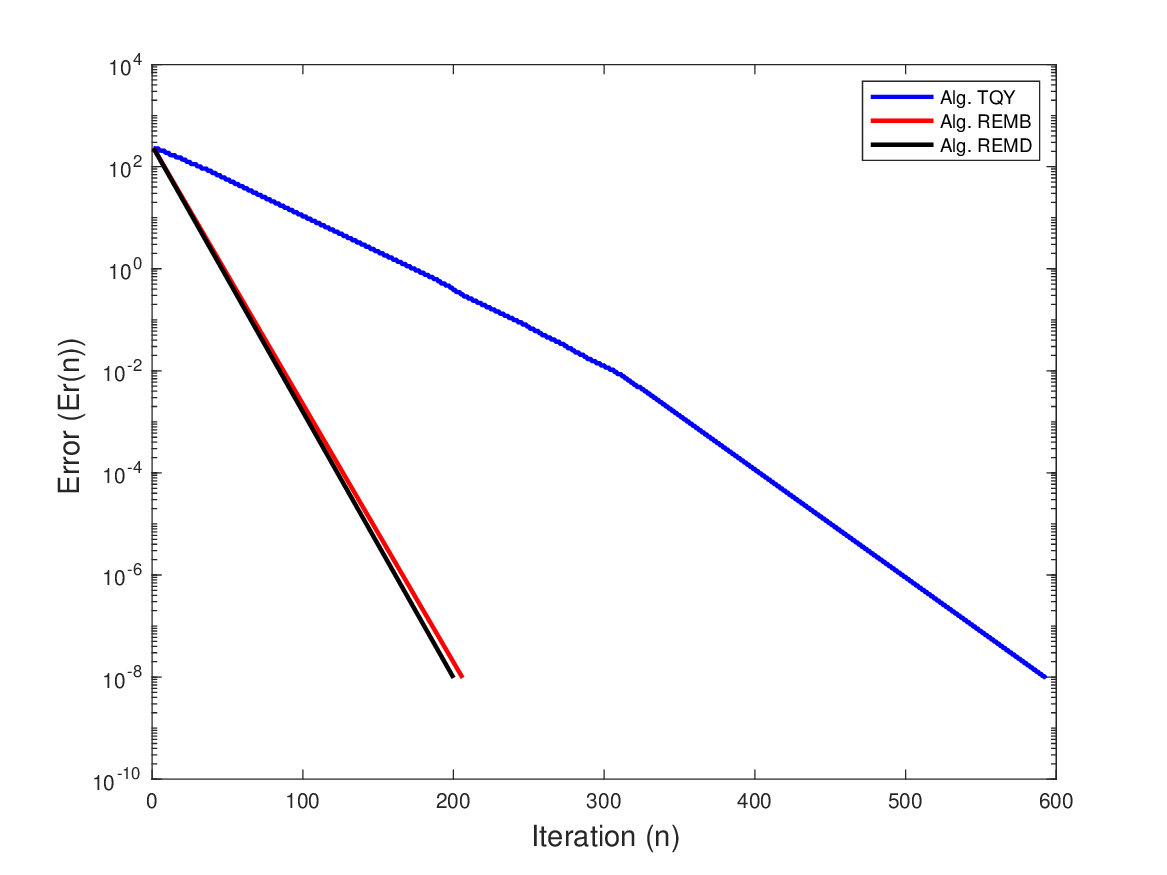}
		\caption{$\lambda=0.06$}
	\end{subfigure}%
	\begin{subfigure}[b]{0.45\textwidth}
		\centering
		\includegraphics[width=\textwidth]{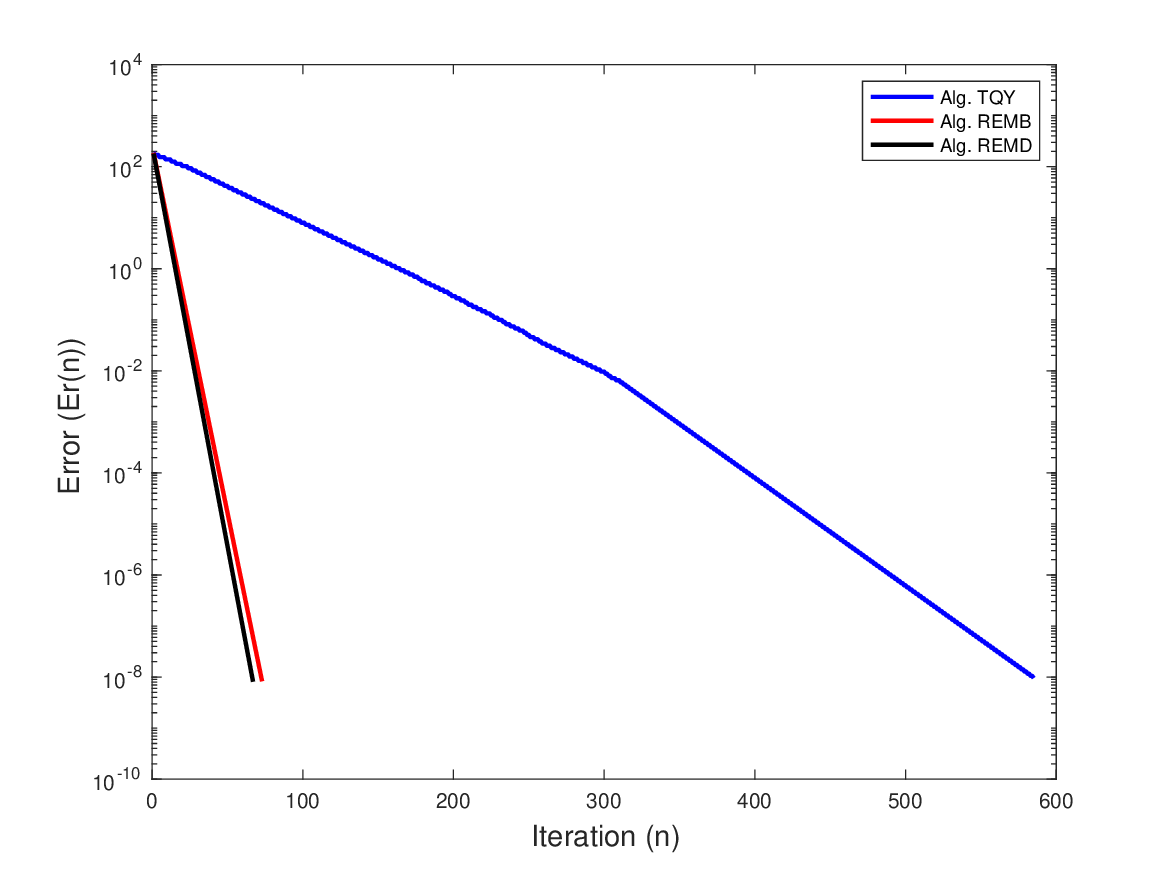}
		\caption{$\lambda=0.18$}
	\end{subfigure}\\
	\begin{subfigure}[b]{0.45\textwidth}
		\centering
		\includegraphics[width=\textwidth]{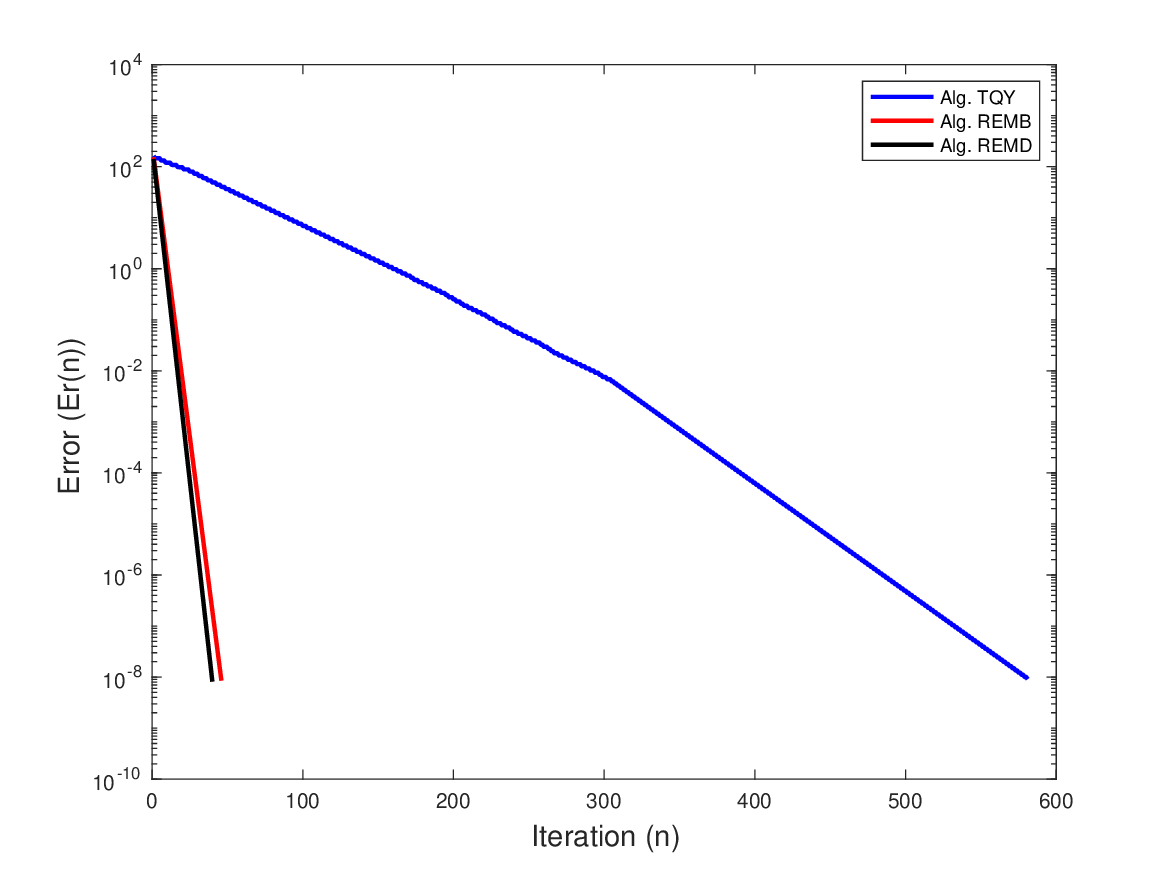}
		\caption{$\lambda=0.3$}
	\end{subfigure}
	\caption{Iteration plot for dimension $100$ with initial point $x_0=ones(N,1)$ and different $\lambda$}
	\label{Fig100}
\end{figure}

\begin{figure}[htbp]
	\centering
	\begin{subfigure}[b]{0.45\textwidth}
		\centering
		\includegraphics[width=\textwidth]{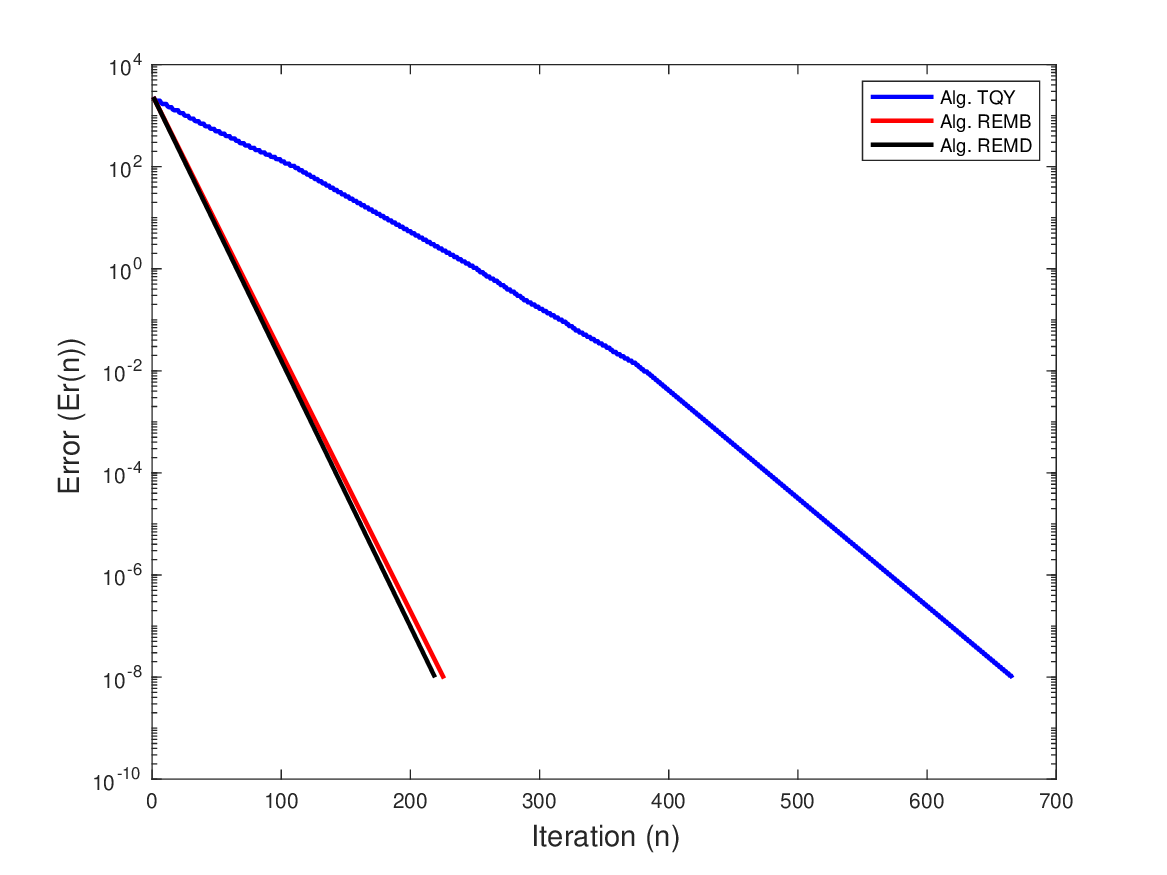}
		\caption{$\lambda=0.06$}
	\end{subfigure}%
	\begin{subfigure}[b]{0.45\textwidth}
		\centering
		\includegraphics[width=\textwidth]{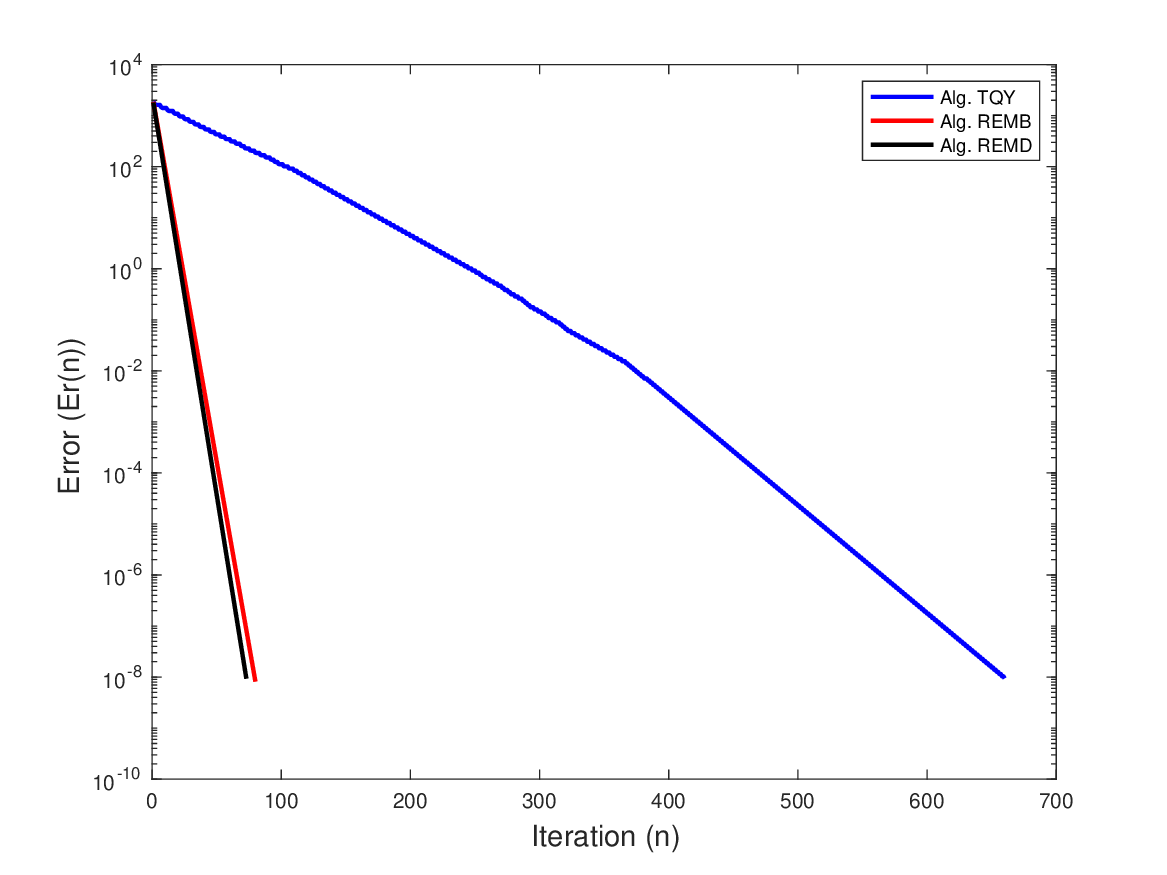}
		\caption{$\lambda=0.18$}
	\end{subfigure}\\
	\begin{subfigure}[b]{0.45\textwidth}
		\centering
		\includegraphics[width=\textwidth]{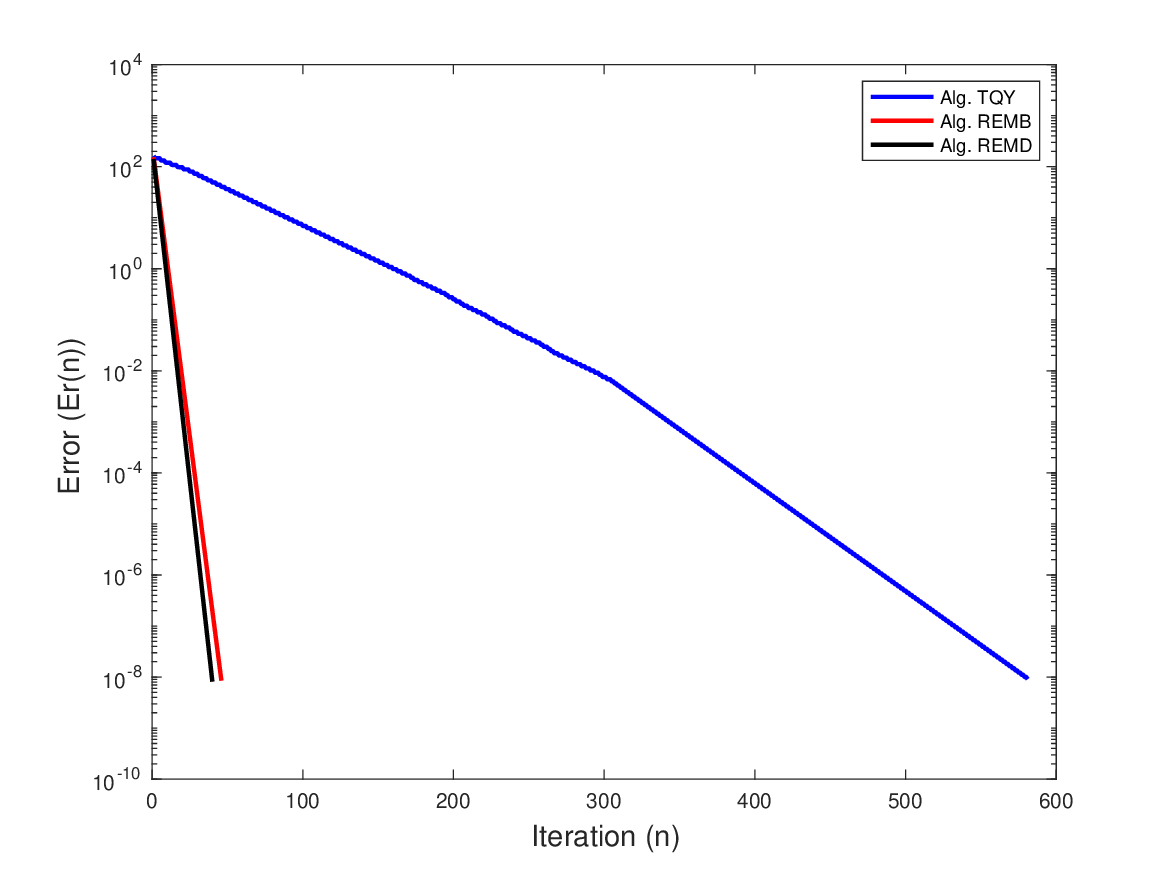}
		\caption{$\lambda=0.3$}
	\end{subfigure}
	\caption{Iteration plot for dimension $1000$ with initial point $x_0=ones(N,1)$ and different $\lambda$}
	\label{Fig1000}
\end{figure}

\begin{figure}[htbp]
	\centering
	\begin{subfigure}[b]{0.45\textwidth}
		\centering
		\includegraphics[width=\textwidth]{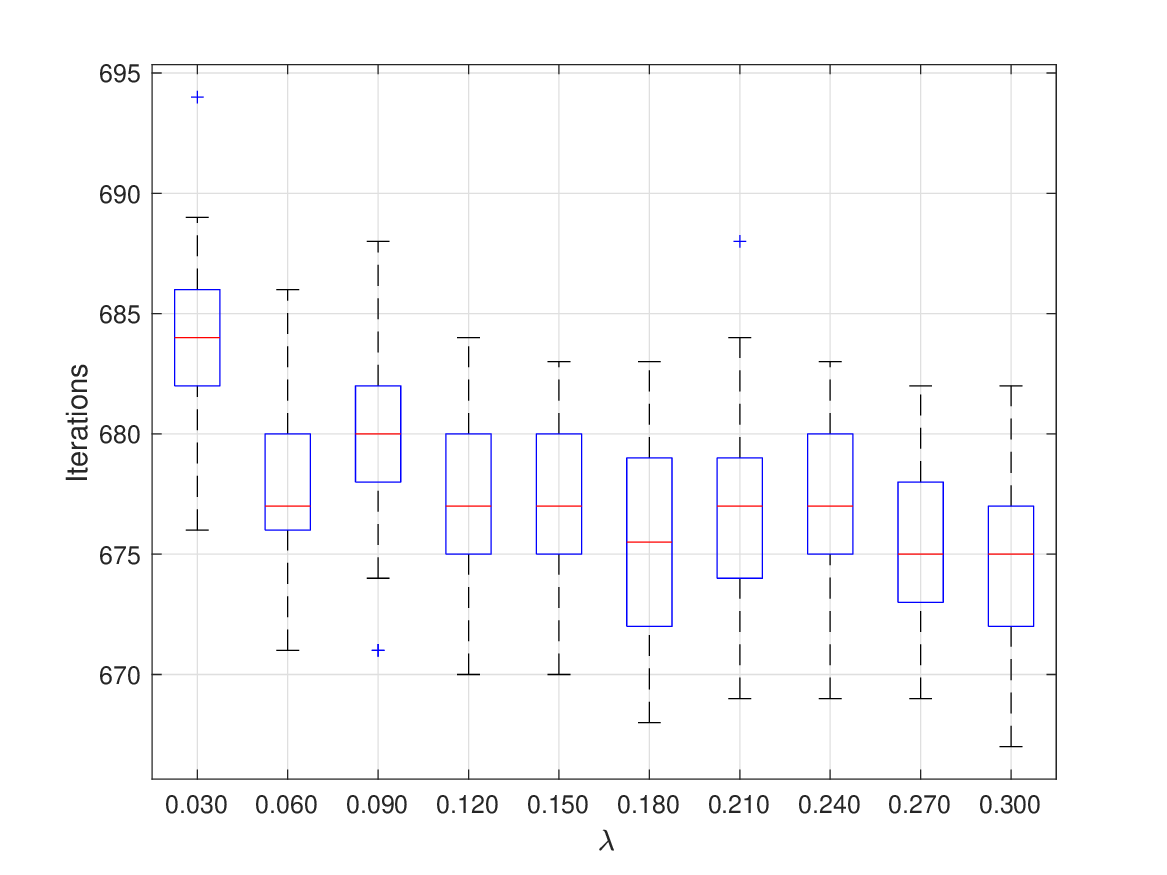}
		\caption{TQY}
	\end{subfigure}%
	\begin{subfigure}[b]{0.45\textwidth}
		\centering
		\includegraphics[width=\textwidth]{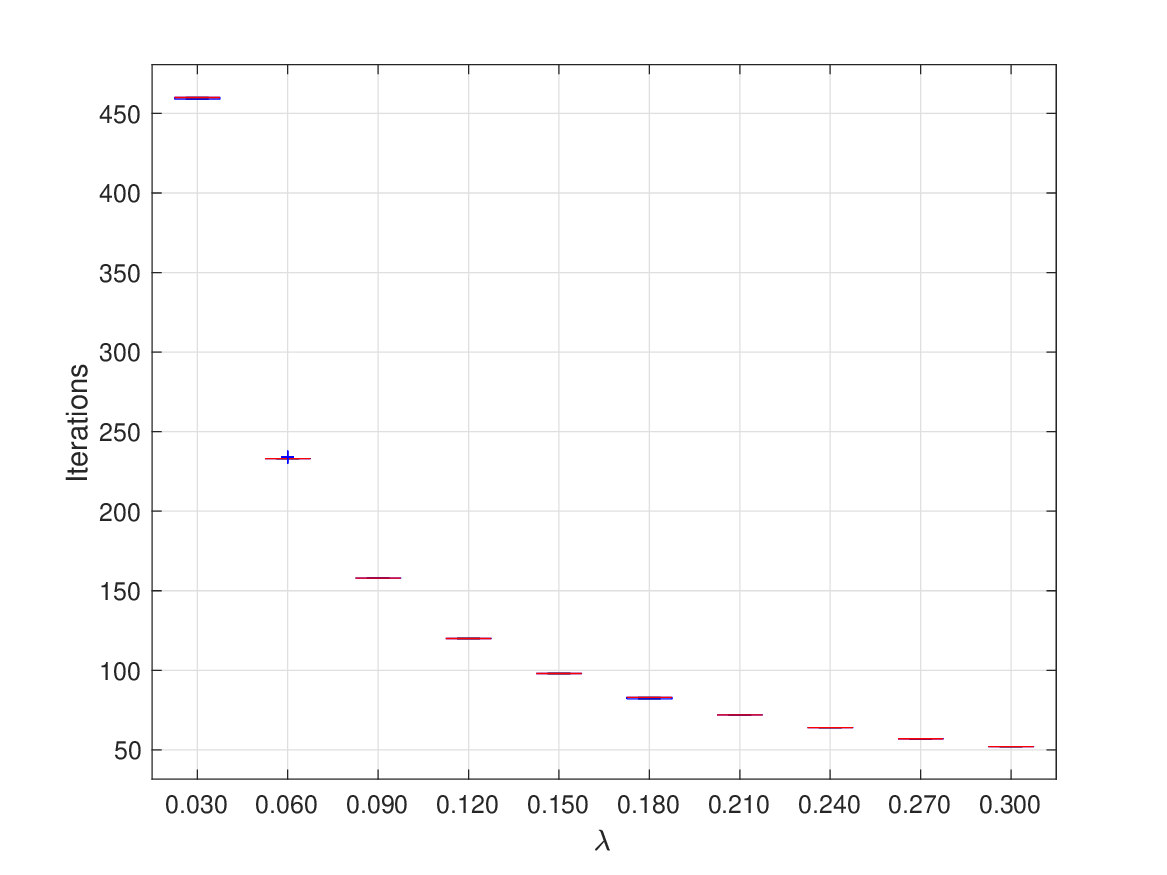}
		\caption{REMB}
	\end{subfigure}\\
	\begin{subfigure}[b]{0.45\textwidth}
		\centering
		\includegraphics[width=\textwidth]{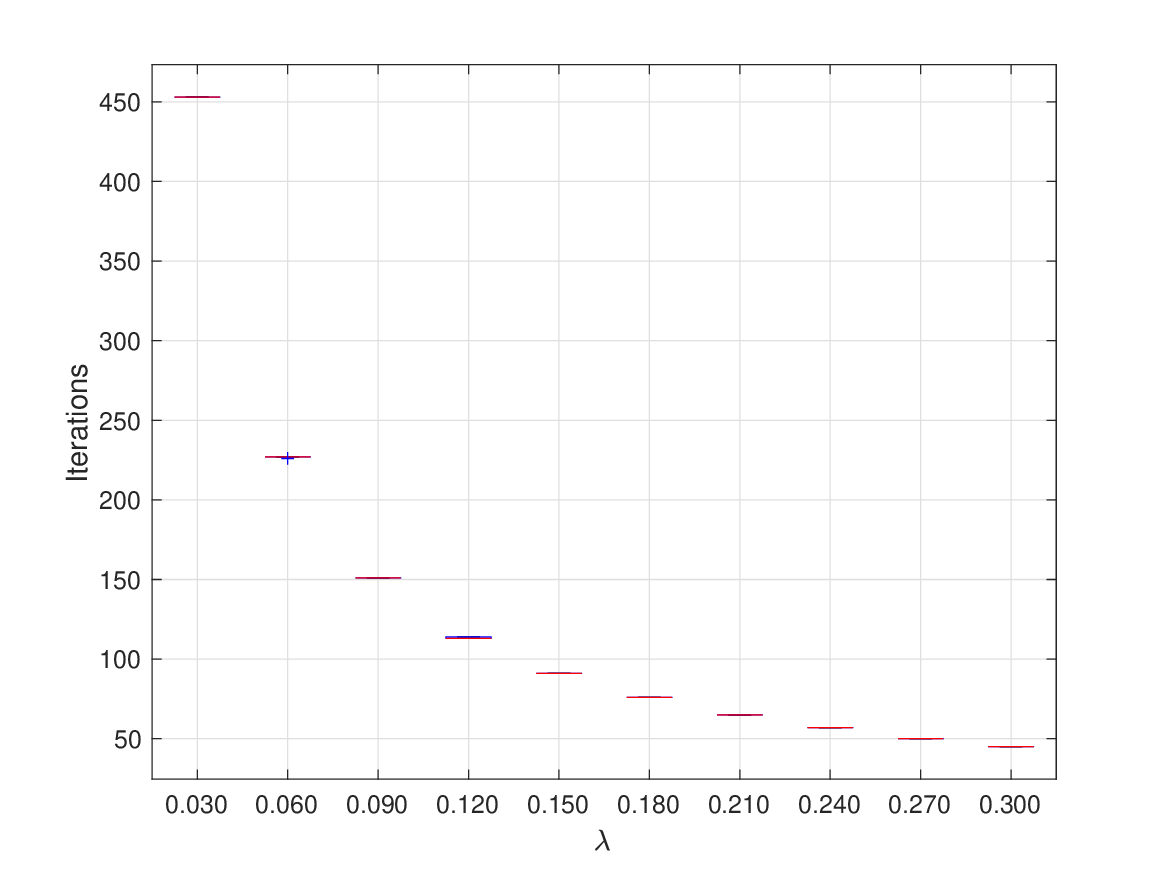}
		\caption{REMD}
	\end{subfigure}
	\caption{Iteration Box Plot}
	\label{BoxItr}
\end{figure}

\begin{figure}[htbp]
	\centering
	\begin{subfigure}[b]{0.45\textwidth}
		\centering
		\includegraphics[width=\textwidth]{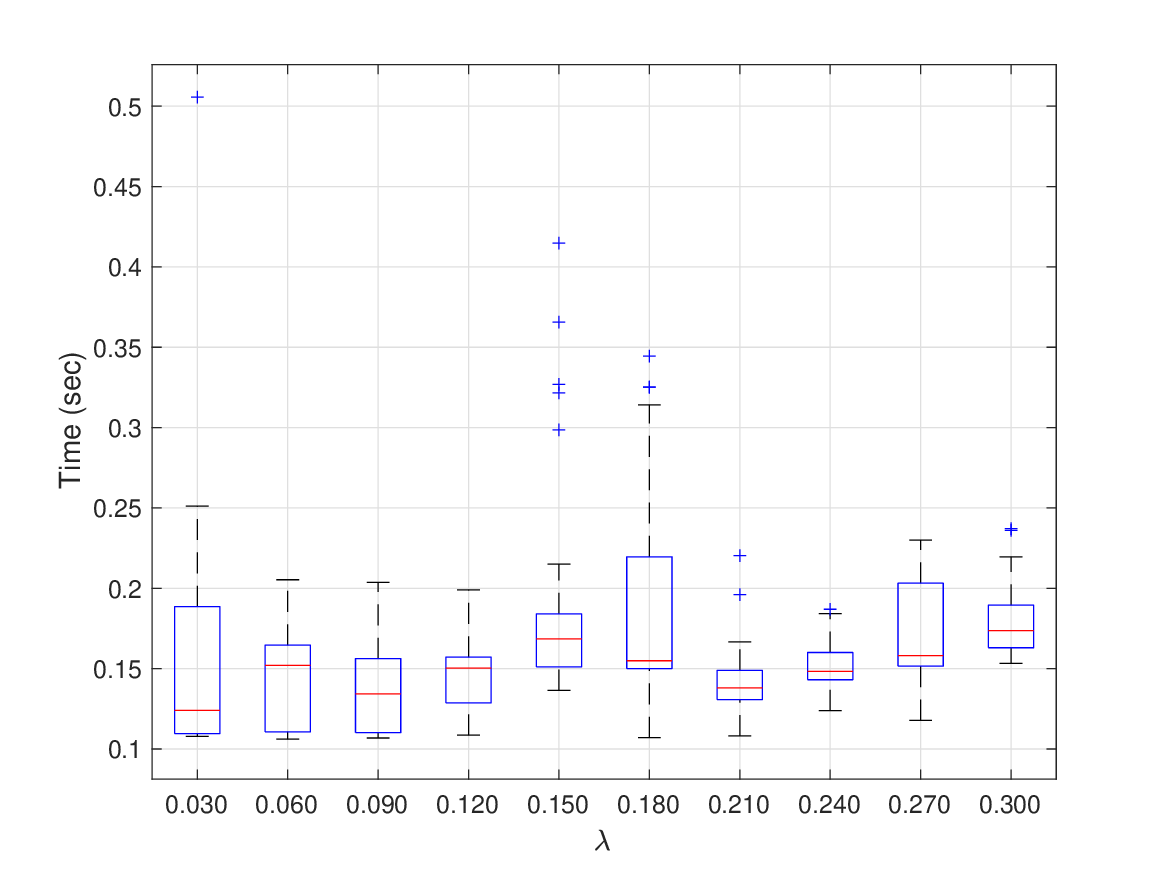}
		\caption{TQY}
	\end{subfigure}%
	\begin{subfigure}[b]{0.45\textwidth}
		\centering
		\includegraphics[width=\textwidth]{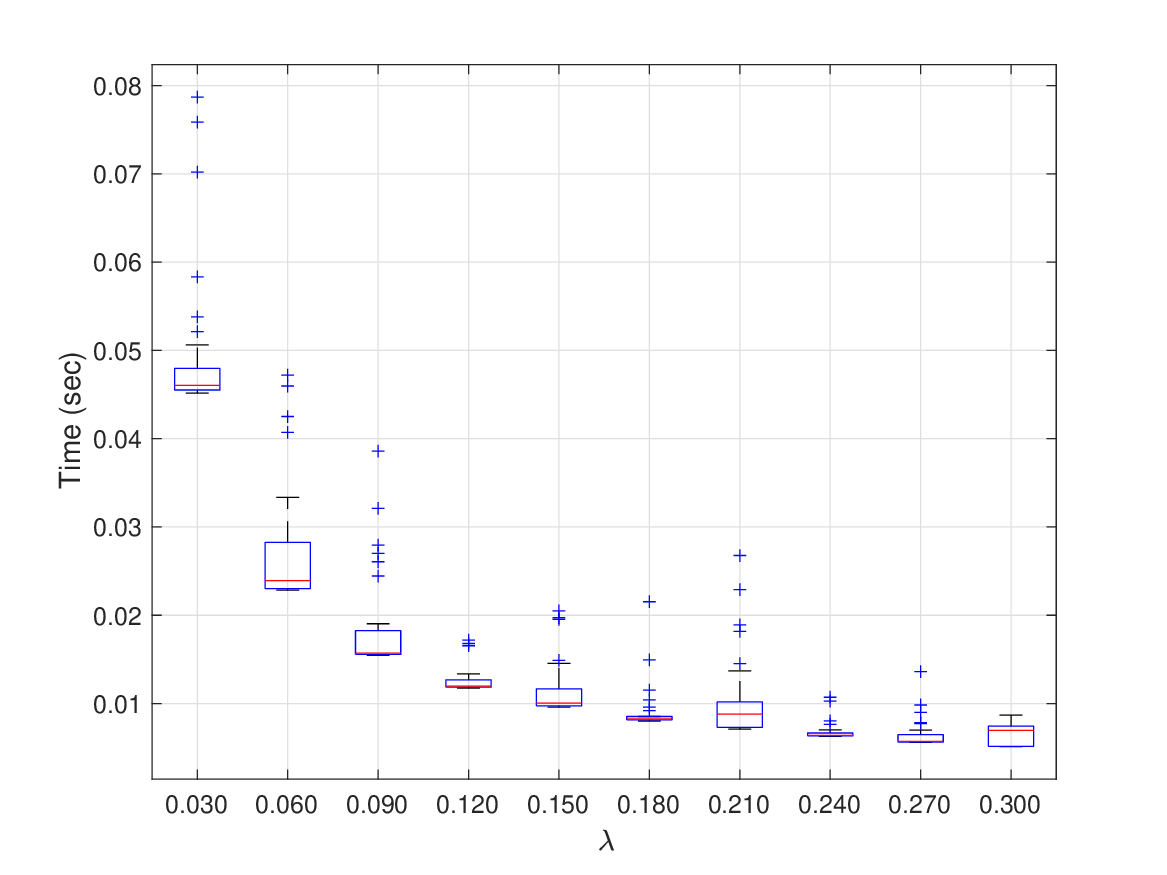}
		\caption{REMB}
	\end{subfigure}\\
	\begin{subfigure}[b]{0.45\textwidth}
		\centering
		\includegraphics[width=\textwidth]{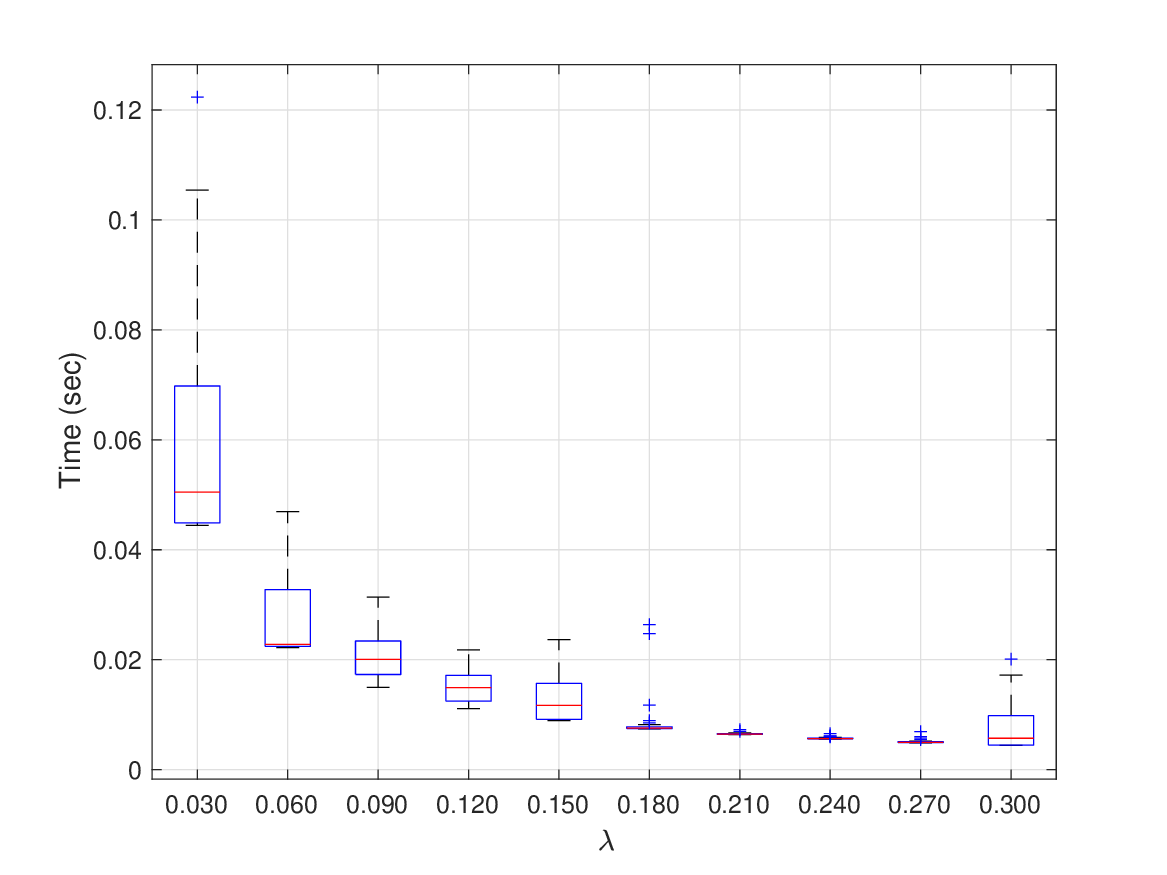}
		\caption{REMD}
	\end{subfigure}
	\caption{Computational time Box Plot}
	\label{BoxCT}
\end{figure}

	\section{Conclusion}
	
	In this work we introduced two regularized extragradient methods for solving equilibrium problems on Hadamard manifolds, employing regularization terms based on the Busemann function and a distance-type metric function. We established convergence of the proposed algorithms without assuming the Lipschitz continuity of the bifunction or imposing additional restrictions on the parameters. In addition, we derived global error bounds and proved that the generated sequences converge at an $R$-linear rate when the bifunction is strongly pseudomonotone.  Moreover, we provided an approximation result for the Busemann function on Hadamard manifolds in terms of the exponential map. These theoretical findings, together with our numerical experiments, demonstrate the efficiency and robustness of the proposed methods.
	
	\section*{Acknowledgment}
	No funding or external support was received for this research.

	\section*{Disclosure statement}
	\textbf{Competing of Interest:} The authors declare that they have no competing interests.\\
	\textbf{Data Availability:} Availability of data and materials not applicable.

	\bibliography{EP}

\end{document}